%\documentclass{article}
%\usepackage[utf8]{inputenc}

%\title{limani-nicolau second draft}
%\author{artur }
%\date{June 2020}

%\begin{document}

%\maketitle

%\section{Introduction}

%\end{document}

\documentclass[12pt, a4paper]{article}
\relpenalty=9999
\binoppenalty=9999
\usepackage{titlesec} 
\usepackage{amsfonts}
\usepackage{amsthm}
\usepackage{amssymb}
\usepackage{amsmath}
\usepackage{theoremref}
\usepackage{enumerate}
\usepackage{tikz}
\usepackage{bbm}
\usepackage{authblk}
\usepackage{commath}
\usepackage{mathtools}
\usepackage{xcolor}

%fat math letters in sections
\makeatletter
\g@addto@macro\bfseries{\boldmath}
\makeatother

\newcommand{\C}{\mathbb{C}}
\newcommand{\R}{\mathbb{R}}

\newcommand{\e}{\varepsilon}
\newcommand{\z}{\zeta}

\newcommand{\la}{\lambda}
\newcommand{\fr}{\frac}
\newcommand{\conj}[1]{\overline{#1}}
\newcommand{\D}{\mathbb{D}}
\newcommand{\B}{\mathcal{B}}

\newcommand{\Dy}{\mathcal{D}}

\renewcommand\Re{\operatorname{Re}}

\newtheorem{thm}{Theorem}[section]

\newtheorem{lemma}[thm]{Lemma}
\newtheorem{cor}[thm]{Corollary}
\newtheorem{prop}[thm]{Proposition}
\theoremstyle{definition}

\theoremstyle{definition}

\newtheorem*{problem*}{Problem}

\titleformat{\section}
{\normalfont\scshape\centering}{\thesection}{1em}{}

\titleformat{\subsection}
[runin]{\bfseries\scshape}{\thesubsection}{0.5em}{}

\titleformat{\subsubsection}
[runin]{}{\thesubsubsection}{0.5em}{}

\begin{document}
\title{\textbf{Bloch functions and Bekoll\'e-Bonami weights}.}
\author{Adem Limani \& Artur Nicolau 
\thanks{The second author is supported in part by the Generalitat de Catalunya (grant 2017 SGR 395) and the Spanish Ministerio de Ciencia e Innovaci\'on (project  MTM2017-85666-P).} }
%\affil{}
\date{}
\newcommand{\Addresses}{{% additional braces for segregating \footnotesize
  \bigskip
  \footnotesize

  A.~Limani, \textsc{Centre for Mathematical Sciences, Lund University, P.O Box 118, SE-22100, Lund, Sweden}\par\nopagebreak
  \textit{E-mail address}, A.~Limani: \texttt{adem.limani@math.lu.se}

  \medskip

  A.~Nicolau %(Corresponding author)
  , \textsc{Departament de  
Matem\`atiques, Universitat Aut\`onoma de Barcelona, 08193 Barcelona}\par\nopagebreak
  \textit{E-mail address}, A.~Nicolau: \texttt{artur@mat.uab.cat}

}}
\maketitle

%
%----------ABSTRACT--------------
%
\begin{abstract}
\noindent We study analogues of well-known relationships between Muckenhoupt weights and $BMO$ in the setting of Bekoll\'e-Bonami weights. For Bekoll\'e-Bonami weights of bounded hyperbolic oscillation, we provide distance formulas of Garnett and Jones-type, in the context of $BMO$ on the unit disc and hyperbolic Lipschitz functions. This leads to a characterization of all weights in this class, for which any power of the weight is a Bekoll\'e-Bonami weight, which in particular reveals an intimate connection between Bekoll\'e-Bonami weights and Bloch functions. On the open problem of characterizing the closure of bounded analytic functions in the Bloch space, we provide a counter-example to a related recent conjecture. This shed light into the difficulty of preserving harmonicity in approximation problems in norms equivalent to the Bloch norm. Finally, we apply our results to study certain spectral properties of Cesar\'o operators. 
\end{abstract}
%
%---------INTRODUCTION-----------
%
\section{Introduction.}
Let $\D$ denote the unit disc and $\partial \D$ the unit circle of the complex plane. Let $L^\infty (\partial \D)$ be the algebra of essentially bounded real-valued functions defined on the unit circle and let $BMO (\partial \D) $ be the space of integrable functions $f: \partial \D \rightarrow \R$ such that 
\begin{equation*}
    \|f \|_{BMO (\partial \D)}= \sup_I \frac{1}{|I|} \int_I |f - f_I| dm < \infty, 
\end{equation*}
where the supremum is taken over all arcs $I \subset \partial \D$, $m$ denotes the Lebesgue measure on $\partial \D$ and $f_I$ denotes the mean of $f$ over $I$. A positive integrable function $w$ on the unit circle is called an $A_2$ weight, if 
\begin{equation*}
    \sup_I \left(\frac{1}{m(I)} \int_I w dm \right) \left(\frac{1}{m(I)} \int_I w^{-1} dm \right) < \infty,
\end{equation*}
where the supremum is taken over all arcs $I \subset \partial \mathbb{D} $. Functions  satisfying this condition are called Muckenhoupt $A_{2}$ weights and appear naturally when studying the boundedness of Calder\'on-Zygmund operators on weighted $L^2$ spaces. Muckenhoupt weights are also intimately related to $BMO(\partial \D)$ functions. The well-known equivalence between the $A_2$ condition and the Helson-Szeg\"o Theorem gives the following beautiful  distance formula. For  $f \in BMO (\partial \D)$ consider the quantity $\alpha (f) = \inf \{t >0 : e^{f / t } \in A_2 \}. $ 
Then there exists a universal constant $C>0$ such that for any $f \in BMO (\partial \D)$, we have 
\begin{equation}\label{distance}
    C^{-1} \alpha (f) \leq \inf \{ \|f - h \|_{BMO (\partial \D)} : h \in L^{\infty} (\partial \D) \} \leq C \alpha (f). 
\end{equation}
In particular, a function $f \in BMO (\partial \D)$ belongs to the closure of $L^\infty (\partial \D)$ in $BMO (\partial \D)$ if and only if $e^{\lambda f } \in A_2$, for any $\lambda >0$. See Chapter VI of \cite{Gar}. 
The main purpose of this paper is to treat analogous results for Bekoll\'e-Bonami weights on $\D$. 
\noindent
To this end, we equip the unit disc $\D$ with the hyperbolic metric
\[\beta(z,\z) = \frac{1}{2} \log\left(\frac{1+\, \abs{\phi_{z}(\z)}^2}{1-\, \abs{\phi_{z}(\z)}^2}\right),  \qquad \, z,\zeta \in \D.
\] 
Here $\phi_{z}$ denotes the M\"obius automorphism on $\D$, given by
\[ \phi_{z}(\z) = \fr{z-\z}{1-\conj{z}\z},  \qquad  \, z,\z \in \D.
\] 
Let $dA$ denote the normalized Lebesgue area-measure on $\D$. An integrable positive function $w$ defined on $\D$ is called a $B_{2}$-weight if 
\begin{equation}\label{B2weight} \left[w\right]_{B_{2}}^2 = \sup_{I} \left(\fr{1}{A(Q_{I})} \int_{Q_{I}} w dA \right)\left(\fr{1}{A(Q_{I})}\int_{Q_{I}}w^{-1} dA\right) < \infty , 
\end{equation}
where the supremum is taken over all arcs $I \subset \partial \D$ and $Q_{I}$ denotes the Carleson square associated to $I$, defined by 
\[ Q_{I} = \left\{z\in \D \setminus \{0\}: \frac{z}{|z|} \in I \, \, , \, 1-|z| < m(I)  \right\}.
\]
Such weights naturally appeared when studying the Bergman projection 
\[ P(f)(z) = \int_{\D}\fr{f(\z)}{(1-\overline{\z} z)^{2}}dA(\z), \qquad z\in \D,
\]
on the space $L^{2}(\D,wdA)$, which consists of square-integrable functions on $\D$ with respect to the measure $wdA$. It was proven in 1978, by Bekoll\'e and Bonami that the Bergman projection $P:L^{2}(\D, wdA) \rightarrow L^{2}(\D, wdA)$ is bounded if and only if $w$ is a $B_{2}$-weight. Actually, they introduced the notion of so-called $B_{p}$-weights, when characterizing weights for which the Bergman projection is bounded on $L^{p}(\D,wdA)$, $1<p<\infty$. See \cite{BeBo}. In this sequel, we will mainly restrict our attention to a specific class of weights, that is, a specific class of integrable and positive functions. A weight $w:\D \rightarrow (0,\infty)$ is said to be of \textbf{bounded hyperbolic oscillation} (on hyperbolic discs with fixed radii), if there exists a constant $C(w)>0$, such that
\begin{equation}\label{WCHD} \abs{\log w(z)- \log w(\zeta)} \leq C(w) \, \left(1+  \beta(z,\z) \right), \qquad  z,\zeta \in \D.
\end{equation} 
Notice that condition \eqref{WCHD} is conformally invariant, that is, invariant under M\"obius automorphisms.
% In fact, we shall in \eqref{B+HLip} give a complete description of weights with bounded hyperbolic oscillation. 
This particular class of weights has been recently studied in \cite{AlePottReg}, where the authors showed that within this class of weights, one can develop a fruitful theory of $B_{\infty}$-weights, satisfying the H\"older inequality, similar to the well-studied Muckenhoupt $A_{\infty}$-weights on $\partial \D$. In general, Bekoll\'e-Bonami weights are far more ill-behaved and lack self-improvement properties, see \cite{Bor}. Functions satisfying \eqref{WCHD} have also naturally appeared in \cite{BeCoZh}, when studying symbols of Hankel operators on the Bergman spaces.\\ \\
\noindent
A function $v:\D \rightarrow \C$ will be called a hyperbolic Lipschitz function if
\begin{equation}\label{HypHLip}\|v\|_{\text{HLip}}= \sup_{\substack{z,\z \in \D \\ z\neq \z}} \fr{\abs{v(z)-v(\z)}}{\beta(z,\z)} < \infty,
\end{equation}
that is, if it is Lipschitz continuous when $\D$ and $\C$ are endowed with the hyperbolic metric and the euclidean metric, respectively. We denote the space of hyperbolic Lipschitz functions by $\textup{HLip}(\D)$.
The subspace of holomorphic functions on $\D$ which belong to $\textup{HLip}(\D)$ is precisely the classical Bloch space $\B$, which consists of analytic functions $g$ in $\D$, such that 
\[ \|g\|_{\B}= \sup_{z\in \D} (1-|z|^{2})|g'(z)| < \infty.
\]
Indeed, is not difficult to show that $\|g\|_{\B} = \|g\|_{\text {HLip}}$. In fact, the Bloch space $\B$ is a M\"obius invariant Banach space (modulo constants) and plays a crucial role in the theory of conformal mappings and in the theory of Bergman spaces. See \cite{Pom}, \cite{DurSch} and \cite{HeKoZh} for further details on this matter. By taking the closure of analytic polynomials in $\B$, we obtain the so-called little Bloch space $\B_{0}$, which consists of analytic functions $g\in \B$, satisfying
\[ \lim_{|z|  \rightarrow 1-}(1-|z|^{2})|g'(z)| = 0.
\]
Let $L^\infty ( \D)$ be the algebra of essentially bounded real-valued functions defined on the unit disc and let $BMO(\D)$ denote the space of functions of bounded mean oscillation with respect to the Lebesgue measure restricted to the unit disc. In other words, an integrable real-valued function $f$ on $\D$ belongs to $BMO (\D)$ if 
\begin{equation*}
    \|f\|_{BMO(\D)} = \sup \frac{1}{A(D \cap \D)} \int_{D \cap \D} |f - f_{D \cap \D}| dA < \infty , 
\end{equation*}
where the supremum is taken over all discs $D$ centered at points in $\D$ and $f_{D \cap \D}$ denotes the mean of $f$ over $D \cap \D$. Taking the closure of polynomials in $BMO(\D)$, we obtain the subspace $VMO(\D)$. These spaces have previously been studied in \cite{CoRoWe} by Coifman, Rochberg and Weiss, where it is for instance showed that the space of analytic functions in $\D$ whose real or imaginary parts belong to $BMO(\D)$ (respectively $VMO(\D)$) coincide with $\B$ (respectively $\B_{0}$), with equivalent norms (modulo constants). It is not difficult to see that if the weight $w$ has bounded hyperbolic oscillation, then $\log w \in BMO(\D)$. \\ \\ 
In Lemma \ref{BB-BMO} we shall show that Bekoll\'e-Bonami weights and $BMO(\D)$ are intimately related. In fact, if $w\in B_2$ then 
\[ A(\{z\in Q: |\log w(z) -(\log w)_{Q}|> \la \}) \leq 2e^{2} \left[w\right]_{B_{2}}^{2} e^{-\la} A(Q)
\]
for any Carleson square $Q$ and any $\la > 2+ \log \left[w\right]_{B_{2}}$. This is a uniform John-Nirenberg type estimate which is distinctive of BMO-functions of bounded norm. Conversely, there exists a universal constant $c>0$ such that $e^{cf} \in B_2$, for any $f\in BMO  (\D)$ with $\|f\|_{BMO(\D)}= 1$. Moreover, we shall in Proposition \ref{HBekBlo} illustrate a close connection between $B_{2}$-weights which are exponentials of harmonic functions and the Bloch space. Our first main result is an analogue of formula \eqref{distance}. Given $f \in BMO (\D)$, we set 
\begin{equation}\label{beta}
    \gamma (f) = \inf \{ t >0 : e^{f/ t} \in B_2 \}. 
\end{equation} 

\begin{thm}\label{T1} 
Let $f$ be a real-valued function on $\D$ with the property that the weight $e^{f}$ has bounded hyperbolic oscillation.
\begin{itemize}
\item[(i)]
Then we have 
\begin{equation*}
    2 \gamma (f) \leq \inf \{\|f-h  \|_{\text {HLip}} : h \in L^{\infty} (\D) \} \leq 4 \gamma (f). 
\end{equation*}

\item[(ii)]  There exists a universal constant $C >0$, such that
\begin{equation*}
    C^{-1} \gamma (f) \leq \inf \{\|f-h  \|_{\text BMO (\D)} : h \in L^{\infty} (\D) \} \leq C \gamma (f). 
\end{equation*}
\end{itemize}
\end{thm}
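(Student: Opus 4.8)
The plan is to establish both inequalities in each part by translating the condition $e^{f/t} \in B_2$ into quantitative statements about the oscillation of $f$, paralleling the proof of the classical Helson--Szeg\"o distance formula \eqref{distance} but adapted to the Carleson-square geometry. Throughout I will exploit the hypothesis that $e^f$ has bounded hyperbolic oscillation, which by the remarks preceding the theorem guarantees $f \in BMO(\D)$ and, more importantly, supplies uniform John--Nirenberg-type control (as recorded for $B_2$-weights in Lemma~\ref{BB-BMO}). The key mechanism is the elementary observation that membership $e^{f/t} \in B_2$ forces, on every Carleson square $Q_I$, a simultaneous bound on the averages of $e^{f/t}$ and $e^{-f/t}$; by Jensen's inequality this yields an upper bound on the mean oscillation of $f$ on $Q_I$, and conversely an exponential-integrability (John--Nirenberg) bound on $f$ translates back into the $B_2$ condition for sufficiently small exponents.

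I would first prove the upper bounds. Suppose $h \in L^\infty(\D)$ is within $\delta$ of $f$ in the relevant norm. Writing $f = (f-h) + h$ and using that $h$ is bounded, I can compare the product of averages defining $[e^{f/t}]_{B_2}^2$ to the corresponding product for $e^{(f-h)/t}$, absorbing the bounded part $h$ into the constant $[\,\cdot\,]_{B_2}$. The heart of the matter is to show that if $\|f-h\|_{\text{HLip}}$ (resp.\ $\|f-h\|_{BMO(\D)}$) is small enough relative to $t$, then $e^{(f-h)/t} \in B_2$; this follows by combining the bounded-hyperbolic-oscillation structure with the John--Nirenberg estimate from Lemma~\ref{BB-BMO}, which controls the exponential averages once the oscillation-to-$t$ ratio drops below the universal threshold. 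Tracking the explicit constants $2$ and $4$ in part (i) will require care: the factor presumably arises from the fact that $\|v\|_{\text{HLip}}$ controls oscillation of $f-h$ in a pointwise, geometry-respecting way (via the hyperbolic metric and the Bloch identity $\|g\|_\B = \|g\|_{\text{HLip}}$), giving a sharper comparison than the softer $BMO$ estimate of part (ii).

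For the lower bounds I would run the argument in reverse. Given $f$ with $e^{f/t} \in B_2$, I must produce a bounded $h$ with $\|f-h\|$ comparably small. The natural candidate is a truncation or a suitably averaged ``good part'' of $f$: since $e^{f/t} \in B_2$ prevents $f$ from having large mean oscillation on any Carleson square, I can use the John--Nirenberg distribution bound to split $f$ into a bounded piece and a piece of small oscillation, exactly as in the classical $BMO$-to-$L^\infty$ decomposition. Here the hyperbolic-Lipschitz refinement in part (i) should come from the fact that bounded hyperbolic oscillation lets one replace the disc-based $BMO$ averages by pointwise hyperbolic differences, so the good part is genuinely hyperbolic-Lipschitz with the stated constant.

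The main obstacle I anticipate is the passage between the two geometries: the $B_2$ condition and $\gamma(f)$ are defined via \emph{Carleson squares} $Q_I$, whereas $\|\cdot\|_{\text{HLip}}$ is defined via the \emph{hyperbolic metric} and $BMO(\D)$ via \emph{Euclidean discs} $D \cap \D$. Showing these three families of ``test regions'' give comparable oscillation measurements---uniformly, and with constants sharp enough to yield the clean factors $2$ and $4$---is the delicate technical core. I expect this to rest on a covering lemma comparing Carleson squares, hyperbolic discs, and Euclidean discs near the boundary, together with the conformal invariance of condition \eqref{WCHD} noted in the excerpt, which lets me normalize each test region by a M\"obius automorphism and reduce to a fixed-scale estimate.
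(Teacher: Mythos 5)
There is a genuine gap in both of the substantive directions, and it is worth noting first that your labels are swapped: the direction you call the ``upper bounds'' (small distance to $L^{\infty}(\D)$ implies $e^{(f-h)/t}\in B_2$, hence small $\gamma(f)$) is the left-hand inequality in each part, and that half of your plan is essentially sound and matches the paper, which uses Lemma~\ref{LipB2} (giving the factor $2$, together with the observation $\gamma(f)=\gamma(f-h)$ for bounded $h$) and Lemma~\ref{BB-BMO}(i). The hard halves are the ones you call ``lower bounds,'' and there your outline misses the two mechanisms the proof actually runs on. For part (i): to pass from $w^{\lambda}\in B_2$ (with $w=e^{f}$) to a two-point estimate $|\log w(z)-\log w(\zeta)|\leq C(\lambda)/\lambda+4\beta(z,\zeta)/\lambda$ valid for \emph{all} $z,\zeta\in\D$, the paper does not use John--Nirenberg control on Carleson squares at all; it applies the boundedness of the Bergman projection on $L^{2}(\D,w^{\lambda}dA)$, via Lemma~\ref{B1*} and conformal invariance (Lemma~\ref{BWconinv}), to the translates $w^{\lambda}\circ\phi_{z}$, and the constant $4$ comes from the kernel $(1-|z|^{2})^{2}/|1-\conj{z}\zeta|^{4}$. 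A Carleson-square oscillation bound is anchored at the boundary and cannot by itself deliver a pointwise hyperbolic estimate with that constant. Moreover, even granting the two-point estimate, your proposed ``truncation or suitably averaged good part'' does not yield the decomposition: truncation destroys any Lipschitz property, and no averaging is specified that would make the good part hyperbolic-Lipschitz. The paper instead restricts $\log w$ to a hyperbolically separated net $\Lambda$, on which it is Lipschitz with norm at most $4\varepsilon+\eta$, extends by the McShane--Valentine theorem to all of $\D$, and verifies that the difference is bounded using the density of the net; this extension step is where the bounded function $h$ actually comes from, and nothing in your outline replaces it.

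For part (ii), the phrase ``split $f$ into a bounded piece and a piece of small oscillation, exactly as in the classical $BMO$-to-$L^{\infty}$ decomposition'' is circular: the statement that a small John--Nirenberg decay rate forces small $BMO$-distance to $L^{\infty}$ \emph{is} the Garnett--Jones theorem \eqref{GJ}, a deep result with no elementary splitting behind it, and the paper explicitly invokes it rather than reproving it. What makes the invocation possible is Lemma~\ref{BB-BMO}(iii), whose entire point is that the exponential rate in \eqref{expdec} is the universal $e^{-\lambda}$, with only the threshold and the multiplicative constant depending on $[w]_{B_2}$; your proposal never isolates this feature, yet without it the Garnett--Jones functional $\varepsilon(f)$ cannot be bounded by $C\alpha$. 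The remaining steps you would also need are the transfer from Carleson squares to Euclidean discs (done cheaply via bounded hyperbolic oscillation, not via the elaborate covering lemma you anticipate) and the reflection extension \eqref{BMOext} of $f$ to $BMO(\R^{2})$, to which \eqref{GJ} is applied. As written, your plan identifies the easy inequalities correctly but leaves both hard inequalities resting on steps that either fail (truncation) or silently assume the main external theorem (Garnett--Jones).
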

\noindent
Note that in the statement of (i) the function $f$ may not belong to $\text{HLip}(\D)$. In fact,   
(i) of \thmref{T1} says that there exists $h \in L^\infty (\D)$, such that $f-h \in \text{HLip}(\D)$ and 
verifies the estimates in (i). From \thmref{T1} we now deduce.

\begin{cor}\label{closure}
Let $w$ be a weight on $\D$ with bounded hyperbolic oscillation. Then the following statements are equivalent: 
\begin{itemize}
\item[(i)]
For any $\lambda >0$, we have $w^\lambda \in B_2$. 

\item[(ii)] For any $\varepsilon >0$ there exists $h_\varepsilon \in L^\infty (\D)$ such that  
$$\|\log w -h_{\varepsilon} \|_{ \text HLip} < \varepsilon .$$  

\item[(iii)] For any $\varepsilon >0$ there exists $h_\varepsilon \in L^\infty (\D)$ such that  
$$\|\log w -h_{\varepsilon} \|_{ BMO (\D)} < \varepsilon . $$ 

\item[(iv)] For any $\e>0$ there exists $C(\e)>0$ such that 
\begin{equation}\label{Maincond} \abs{\log w(z) - \log w(\zeta) } \leq C(\e) + \e \beta(z, \zeta), \qquad  z,\zeta \in \D. 
\end{equation}

\end{itemize}
\end{cor}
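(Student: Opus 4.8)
The plan is to set $f = \log w$ and exploit that $e^{f} = w$ has bounded hyperbolic oscillation, so that Theorem~\ref{T1} applies to $f$. The backbone of the argument is the observation that condition (i) is exactly the assertion $\gamma(f) = 0$. Writing $w^{\lambda} = e^{\lambda f} = e^{f/t}$ with $t = 1/\lambda$, condition (i) says $e^{f/t} \in B_2$ for every $t > 0$. To match this with $\gamma(f) = 0$, I would first record the elementary stability of $B_2$ under taking powers downward: applying Jensen's inequality to the probability measure $A(Q_I)^{-1} dA|_{Q_I}$ gives $[w^{s}]_{B_2} \leq [w]_{B_2}^{s}$ for $0 < s \leq 1$. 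Consequently, if $e^{f/t_0} \in B_2$ then $e^{f/t} = (e^{f/t_0})^{t_0/t} \in B_2$ for all $t \geq t_0$, so $\{t > 0 : e^{f/t} \in B_2\}$ is an interval unbounded above. Hence $\gamma(f) = 0$ holds precisely when $e^{f/t} \in B_2$ for every $t > 0$, which is exactly (i).

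With this in hand, the equivalences (i) $\Leftrightarrow$ (ii) $\Leftrightarrow$ (iii) are immediate from Theorem~\ref{T1}. Part (i) of that theorem gives $2\gamma(f) \leq \inf\{\|f - h\|_{\text{HLip}} : h \in L^{\infty}(\D)\} \leq 4\gamma(f)$, so the HLip-distance from $f$ to $L^{\infty}(\D)$ vanishes exactly when $\gamma(f) = 0$; since (ii) is the statement that this distance is zero, (i) $\Leftrightarrow$ (ii) follows. Likewise, part (ii) of Theorem~\ref{T1} shows the $BMO(\D)$-distance is comparable to $\gamma(f)$, yielding (i) $\Leftrightarrow$ (iii). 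It therefore remains only to splice (iv) into this chain, and I would do so by establishing (ii) $\Leftrightarrow$ (iv).

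The implication (ii) $\Rightarrow$ (iv) is a direct triangle-inequality computation: if $\|f - h_{\varepsilon}\|_{\text{HLip}} < \varepsilon$ with $h_{\varepsilon} \in L^{\infty}(\D)$, then for all $z, \zeta \in \D$,
\[
|f(z) - f(\zeta)| \leq |(f - h_{\varepsilon})(z) - (f - h_{\varepsilon})(\zeta)| + |h_{\varepsilon}(z) - h_{\varepsilon}(\zeta)| \leq \varepsilon\, \beta(z,\zeta) + 2\|h_{\varepsilon}\|_{\infty},
\]
which is (iv) with $C(\varepsilon) = 2\|h_{\varepsilon}\|_{\infty}$.

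The reverse implication (iv) $\Rightarrow$ (ii) is the only step carrying genuine content, and the crux is to produce a bounded $h$ making $f - h$ truly hyperbolic Lipschitz of small norm, even though $f$ itself need not lie in $\text{HLip}(\D)$. I would use a McShane-type inf-convolution: given $\varepsilon > 0$, let $C(\varepsilon)$ be the constant supplied by (iv) and set
\[
g_{\varepsilon}(z) = \inf_{\zeta \in \D} \big( f(\zeta) + \varepsilon\, \beta(z,\zeta) \big).
\]
Each function $z \mapsto f(\zeta) + \varepsilon\, \beta(z,\zeta)$ is $\varepsilon$-Lipschitz for the hyperbolic metric by the triangle inequality for $\beta$, so their infimum $g_{\varepsilon}$ obeys $\|g_{\varepsilon}\|_{\text{HLip}} \leq \varepsilon$ once it is known to be finite. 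Choosing $\zeta = z$ gives $g_{\varepsilon}(z) \leq f(z)$, while (iv) gives $f(\zeta) + \varepsilon\, \beta(z,\zeta) \geq f(z) - C(\varepsilon)$ and hence $g_{\varepsilon}(z) \geq f(z) - C(\varepsilon)$. Thus $h_{\varepsilon} := f - g_{\varepsilon}$ takes values in $[0, C(\varepsilon)]$, so $h_{\varepsilon} \in L^{\infty}(\D)$, and $f - h_{\varepsilon} = g_{\varepsilon}$ satisfies $\|f - h_{\varepsilon}\|_{\text{HLip}} \leq \varepsilon$, which is precisely (ii). The anticipated obstacle is exactly verifying that this inf-convolution is finite, has the claimed small Lipschitz constant, and leaves a bounded error — all three resting on the two-sided estimate $f(z) - C(\varepsilon) \leq g_{\varepsilon}(z) \leq f(z)$ furnished by (iv).
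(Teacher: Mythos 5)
Your proof is correct, and it genuinely diverges from the paper's in how condition (iv) is handled. The equivalence of (i), (ii), (iii) via the distance formulas of Theorem~\ref{T1} is exactly the paper's route, but you make explicit a point the paper leaves implicit: identifying (i) with $\gamma(\log w)=0$ requires knowing that $e^{f/t}\in B_2$ propagates from small $t$ to larger $t$, and your Jensen estimate $[w^{s}]_{B_2}\le [w]_{B_2}^{s}$ for $0<s\le 1$ settles this cleanly. The real difference is the splice point for (iv). The paper connects (iv) to (i) on both sides: it proves (i)~$\Rightarrow$~(iv) by re-running the proof of part (i) of Theorem~\ref{T1} to obtain the pointwise estimate \eqref{petit2}, which rests on Lemma~\ref{B1*} and hence on boundedness of the Bergman projection; and it proves (iv)~$\Rightarrow$~(i) by a direct quantitative computation, tiling the Carleson square $Q_I$ by the top halves $\{T_J\}_{J\in\Dy(I)}$ and verifying the $B_2$ condition for each power $w^{\lambda}$ with explicit constants, cf.\ \eqref{thm1est} and \eqref{avest}. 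You instead connect (iv) to (ii): one direction is the trivial triangle inequality, and the other is the inf-convolution $g_{\varepsilon}(z)=\inf_{\zeta\in\D}\left(f(\zeta)+\varepsilon\,\beta(z,\zeta)\right)$, which is an explicit form of the McShane extension that the paper invokes abstractly (through a net $\Lambda$ and the McShane--Valentine theorem) inside the proof of Theorem~\ref{T1}(i); the two-sided bound $f-C(\varepsilon)\le g_{\varepsilon}\le f$ that you extract from (iv) is precisely what makes $h_{\varepsilon}=f-g_{\varepsilon}$ bounded, and your verification of finiteness, the $\varepsilon$-Lipschitz bound, and the bounded remainder is complete. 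What each approach buys: yours is shorter and softer, avoiding the tiling computation entirely and never touching $B_2$ machinery in the (iv) steps; the paper's tiling argument for (iv)~$\Rightarrow$~(i) is self-contained and quantitative, producing explicit $B_2$ bounds for every power of $w$ (which the paper reuses in the spectral-radius discussion of Section~4), whereas your (iv)~$\Rightarrow$~(i) necessarily routes through Theorem~\ref{T1} (i.e.\ Lemma~\ref{LipB2}) together with your power-monotonicity lemma. One cosmetic point: your construction gives $\|\log w-h_{\varepsilon}\|_{\text{HLip}}\le\varepsilon$ rather than the strict inequality of (ii); running the argument with $\varepsilon/2$ fixes this.
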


\noindent
A naturally occurring class of weights which have bounded hyperbolic oscillation are weights of the form $e^{\Re(g)}$, for $g\in \B$. These weights appear in the study of spectral properties of generalized Cesar\'o operators on the Bergman space. See \cite{AleCon} and \cite{AdBa}. For our purposes, we highlight the following immediate, yet important Corollary of Theorem \ref{T1}.
 
\begin{cor}\label{clbl}
Let $g\in \B$. Then $e^{\Re( \la g)}$ is a $B_{2}$-weight for all $\lambda \in \C$, if and only if for any $\e>0$, there exists $C(\e)>0$ such that 
\begin{equation}\label{AMaincond} \abs{g(z) - g(\zeta)} \leq C(\e) + \e \beta(z, \z), \qquad \,  z,\zeta \in \D.
\end{equation}
\end{cor}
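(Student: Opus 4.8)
The plan is to derive both implications directly from Corollary~\ref{closure}, applied to the two weights $e^{\Re g}$ and $e^{\Im g}$, after first recording that any Bloch function produces weights of bounded hyperbolic oscillation. Since $\|g\|_{\B}=\|g\|_{\text{HLip}}$, we have $|g(z)-g(\zeta)|\le \|g\|_{\B}\,\beta(z,\zeta)$ for all $z,\zeta\in\D$, and hence both $|\Re g(z)-\Re g(\zeta)|$ and $|\Im g(z)-\Im g(\zeta)|$ are bounded by $\|g\|_{\B}(1+\beta(z,\zeta))$. Thus $\log e^{\Re g}=\Re g$ and $\log e^{\Im g}=\Im g$ satisfy \eqref{WCHD}, so both weights have bounded hyperbolic oscillation and Corollary~\ref{closure} is applicable to each.

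For the direction assuming \eqref{AMaincond}, I would fix $\lambda\in\C$; the case $\lambda=0$ is trivial since the constant weight $1$ lies in $B_2$. For $\lambda\ne 0$ consider the weight $w_\lambda=e^{\Re(\lambda g)}$, whose logarithm is $\Re(\lambda g)$, and verify condition (iv) of Corollary~\ref{closure} for it. Indeed, $|\Re(\lambda g)(z)-\Re(\lambda g)(\zeta)|\le |\lambda|\,|g(z)-g(\zeta)|$, so applying \eqref{AMaincond} with $\varepsilon/|\lambda|$ in place of $\varepsilon$ yields $|\Re(\lambda g)(z)-\Re(\lambda g)(\zeta)|\le |\lambda|\,C(\varepsilon/|\lambda|)+\varepsilon\,\beta(z,\zeta)$, which is precisely (iv) for $w_\lambda$ with additive constant $|\lambda|\,C(\varepsilon/|\lambda|)$. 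Corollary~\ref{closure} then gives $w_\lambda^t\in B_2$ for all $t>0$, and taking $t=1$ shows $e^{\Re(\lambda g)}=w_\lambda\in B_2$, as desired.

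For the converse, assume $e^{\Re(\lambda g)}\in B_2$ for every $\lambda\in\C$, and write $g=u+iv$ with $u=\Re g$ and $v=\Im g$. The key observation is the pair of identities $e^{t u}=e^{\Re(t g)}$ and $e^{t v}=e^{\Re(-it g)}$, valid for every $t>0$. Hence the hypothesis, taken with $\lambda=t$ and $\lambda=-it$ respectively, shows that all positive powers of $e^{u}$ and of $e^{v}$ are $B_2$-weights, i.e. condition (i) of Corollary~\ref{closure} holds for both $w=e^{u}$ and $w=e^{v}$. Invoking the equivalence (i)$\Leftrightarrow$(iv) of that corollary, for any $\varepsilon>0$ we obtain constants with $|u(z)-u(\zeta)|\le C_1(\varepsilon)+\varepsilon\,\beta(z,\zeta)$ and $|v(z)-v(\zeta)|\le C_2(\varepsilon)+\varepsilon\,\beta(z,\zeta)$. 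The triangle inequality $|g(z)-g(\zeta)|\le |u(z)-u(\zeta)|+|v(z)-v(\zeta)|$ then yields \eqref{AMaincond} after replacing $\varepsilon$ by $\varepsilon/2$ and setting $C(\varepsilon)=C_1(\varepsilon/2)+C_2(\varepsilon/2)$.

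Since the statement is packaged as a corollary, there is no genuine obstacle beyond bookkeeping: the substantive content is entirely carried by Corollary~\ref{closure}. The two points that require care are the passage between the complex parameter $\lambda$ and the real harmonic functions $\Re(\lambda g)$ through the identities $\Re(tg)=t u$ and $\Re(-itg)=t v$, and the correct rescaling of $\varepsilon$ together with the accompanying additive constants in each direction.
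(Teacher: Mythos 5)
Your proof is correct and follows exactly the route the paper intends: the authors state Corollary~\ref{clbl} as an immediate consequence of Theorem~\ref{T1} via Corollary~\ref{closure}, and your argument supplies precisely the intended bookkeeping --- bounded hyperbolic oscillation of $e^{\Re(\la g)}$ from $\|g\|_{\B}=\|g\|_{\text{HLip}}$, the equivalence (i)$\Leftrightarrow$(iv) applied to $e^{\Re(tg)}=e^{tu}$ and $e^{\Re(-itg)}=e^{tv}$, and the rescaling of $\e$ with the triangle inequality to pass between $u,v$ and $g$. No gaps.
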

\noindent Let $H^\infty$ be the algebra of bounded analytic functions  in $\D$. Schwarz's Lemma gives that $H^\infty \subset \B $. Observe that if $g$ belongs to the closure of $H^{\infty}$ in  $\B$, then it satisfies condition (\ref{AMaincond}). 
Indeed, if $g_{\e}\in H^{\infty}$ is an approximate of $g$ in $\B$, then 
 \[ |g(z)-g(\zeta)| %\leq 2\| g_{\e} \|_{\infty} + |(g-g_{\e})(z) -(g-g_{\e})(\z)| 
  \leq 2\|g_{\e}\|_{\infty} + \|g-g_{\e}\|_{\B} \beta(z,\zeta),  \qquad \,  z,\zeta \in \D.
 \]
A natural question is whether a Bloch function $g$ satisfying (\ref{AMaincond}) must belong to the closure of $H^{\infty}$ in $\B$. It was recently conjectured in \cite{AdBa} that  if $e^{\Re(\la g)}$ is a $B_{2}$-weight, for all $\la \in \C$, then $g$ belongs to the closure of $H^{\infty}$ in $\B$. Unfortunately, this turns out to be false and is essentially the main content of our next result. The problem of characterizing the closure of $H^{\infty}$ in $\B$ was initially posed in 1974 by Anderson, Clunie and Pommerenke (see \cite{AnClPo}) and remains open to this date. It is worth mentioning that several variants of this problem have been studied in \cite{NicGal}, \cite{GaMoPa}, \cite{GhaZh}, \cite{NicGib} and \cite{GibSak}. For $0<p<\infty$, let $H^p$ denote the classical Hardy of analytic functions $f$ in $\D$, satisfying 
\begin{equation*}
    \sup_{0<r<1} \int_{\partial \D} |f(r \xi)|^p dm(\xi) < \infty. 
\end{equation*}

\begin{thm}\label{MThm3} There exists a function $g\in \B$ such that $g$ satisfies condition \eqref{AMaincond}, but $g$ does not belong to the closure of $H^p \cap \B$ in $\B$, for any $0< p \leq \infty$.
\end{thm}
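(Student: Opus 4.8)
My plan is to exploit the gap, already visible in \thmref{T1} and Corollary~\ref{closure}, between approximating $g$ by \emph{arbitrary} bounded functions in the $\text{HLip}$-seminorm (which is exactly what \eqref{AMaincond} encodes, by Corollary~\ref{clbl}) and approximating it by bounded \emph{holomorphic} functions in the Bloch norm. The seminorm $\|\cdot\|_{\text{HLip}}$ only sees the weighted gradient $(1-|z|^2)|g'(z)|$, so a small Bloch perturbation cannot shrink the set where this gradient is large. I would convert this into a rigidity statement for holomorphic approximants via the Lusin area integral $S(f)(\xi)=\big(\int_{\Gamma(\xi)}|f'(z)|^2\,dA(z)\big)^{1/2}$, where $\Gamma(\xi)$ is a fixed-aperture Stolz cone at $\xi\in\partial\D$.

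First I would record the following necessary condition for membership in the closure, valid simultaneously for every $0<p\le\infty$. Write $\Omega_{t}(g)=\{z\in\D:(1-|z|^2)|g'(z)|>t\}$. If $g_n\to g$ in $\B$ with $g_n\in H^p\cap\B$ and $\|g-g_n\|_{\B}<\varepsilon/2$, then the triangle inequality for the Bloch seminorm forces $\Omega_{\varepsilon}(g)\subseteq\Omega_{\varepsilon/2}(g_n)$, so that $|g_n'(z)|\ge(\varepsilon/2)(1-|z|^2)^{-1}$ on $\Omega_\varepsilon(g)$. Consequently, for any $\xi$ with $\int_{\Gamma(\xi)\cap\Omega_\varepsilon(g)}(1-|z|^2)^{-2}\,dA(z)=\infty$ one gets $S(g_n)(\xi)=\infty$. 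Since $H^p\subset N^{+}$ and Smirnov-class functions have a finite area integral at almost every boundary point (the local Fatou/area theorem), the set $E_\varepsilon(g)=\{\xi:\text{the hyperbolic area of }\Gamma(\xi)\cap\Omega_\varepsilon(g)\text{ is infinite}\}$ must be null. Thus it suffices to construct $g\in\B$ satisfying \eqref{AMaincond} for which $E_{\varepsilon_0}(g)$ has positive measure for some $\varepsilon_0>0$.

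The construction must reconcile two opposing demands: $(1-|z|^2)|g'(z)|$ should remain $\ge\varepsilon_0$ along infinitely many hyperbolic bands inside the cones over a fixed arc $I\subset\partial\D$ (forcing the area integral to diverge on all of $I$), while the genuine oscillation $|g(z)-g(\zeta)|$ stays $o(\beta(z,\zeta))$ as demanded by \eqref{AMaincond}. Here the decisive constraint is harmonicity: because $\log|g'|$ is harmonic, the mean-value property prevents $(1-|z|^2)|g'(z)|$ from being bounded below on a whole circle, so each large-gradient band must be offset, at its scale, by directions where $g'$ is very small. I would therefore prescribe the large-gradient region only at a very sparse sequence of hyperbolically separated scales $r_k\to1$, concentrated in the Carleson region over $I$, and realize $g$ as an explicit analytic object (an infinite product or series whose factors are singular building blocks adapted to the band over $I$ at scale $r_k$, of the type $\cos\!\big(c\log\tfrac{1}{1-\overline{\xi}z}\big)$), with the phases of consecutive bands arranged to cancel partially.

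The verification then splits cleanly. For \eqref{AMaincond}: since the active scales are super-sparse, a geodesic of hyperbolic length $R$ meets only $o(R)$ bands, each contributing $O(\varepsilon_0)$ to the oscillation, which yields $|g(z)-g(\zeta)|\le C(\varepsilon)+\varepsilon\,\beta(z,\zeta)$; equivalently, by Corollary~\ref{clbl}, $e^{\Re(\lambda g)}\in B_2$ for all $\lambda\in\C$. For non-membership: every $\xi\in I$ has a cone crossing all of the infinitely many bands, so $\int_{\Gamma(\xi)\cap\Omega_{\varepsilon_0}(g)}(1-|z|^2)^{-2}\,dA=\infty$, whence $E_{\varepsilon_0}(g)\supseteq I$ and $g$ lies outside the closure of $H^p\cap\B$ for every $p$ by the lemma above. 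I expect the genuine obstacle to be the construction itself: producing a single holomorphic $g$ that is at once sublinearly (indeed merely logarithmically) growing, keeps $(1-|z|^2)|g'|\ge\varepsilon_0$ on a cone-fat subset over $I$, and has sublinear oscillation, all while respecting the mean-value budget imposed by harmonicity of $\log|g'|$. This is exactly where preserving harmonicity fails: the bounded $\text{HLip}$-approximant guaranteed by \eqref{AMaincond} is intrinsically non-holomorphic, and no holomorphic substitute can exist, precisely because a holomorphic function with this gradient profile over $I$ is forced to have an almost-everywhere infinite area integral on $I$.
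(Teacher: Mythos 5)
Your reduction of non-membership to an area-integral obstruction is sound, and it is essentially the necessity half of the criterion the paper invokes: $g$ lies in the closure of $H^p\cap\B$ in $\B$ only if, for each $\varepsilon>0$, the cone integrals $\int_{\Gamma(\xi)\cap K(\varepsilon,g)}(1-|z|^2)^{-2}dA$ are finite for a.e.\ $\xi$ (the paper cites the full characterization from \cite{GaMoPa} and \cite{NicGal}; your derivation via $\Omega_\varepsilon(g)\subseteq\Omega_{\varepsilon/2}(g_n)$ and the local Fatou/area theorem for $N^+$ is a legitimate self-contained substitute, valid for all $0<p\le\infty$). Your verification scheme for \eqref{AMaincond} -- small gradient off sparse hyperbolic bands, plus a crossing-length count along geodesics -- is also exactly the paper's argument, down to the estimate that long geodesics spend an $\varepsilon$-fraction of their length in the bad set. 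But the proof has a genuine gap at its core: the function $g$ is never constructed. You defer it explicitly (``I expect the genuine obstacle to be the construction itself''), propose unverified building blocks of the type $\cos\bigl(c\log\frac{1}{1-\overline{\xi}z}\bigr)$ over a fixed arc with ``phases arranged to cancel partially'', and check none of the three requirements (membership in $\B$, the gradient lower bound on a cone-fat set, the sublinear oscillation). Since the whole content of the theorem is the existence of such a $g$, this is not a proof but a reduction plus a research plan.

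Moreover, the structural claim steering your plan is false: harmonicity of $\log|g'|$ does \emph{not} prevent $(1-|z|^2)|g'(z)|$ from being bounded below on whole circles. Already $g(z)=z^{n}$ satisfies $(1-|z|^2)\,n|z|^{n-1}\ge c>0$, with $c$ absolute, on the entire annulus $\{1/(2n)\le 1-|z|\le 2/n\}$; there is no ``mean-value budget'' forcing compensating directions of small $|g'|$, and hence no need to localize over an arc or engineer phase cancellation. This is precisely what the paper exploits: it takes a \emph{super-lacunary} series $g(z)=\sum_k a_k z^{n_k}$ with $n_{k+1}/n_k\to\infty$, $\sup_k|a_k|\le 1$ and $a_k\not\to 0$. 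Super-lacunarity makes the tail and head cross-terms on the annulus $A_j(2)=\{1/(2n_j)\le 1-|z|\le 2/n_j\}$ vanish as $j\to\infty$, so $(1-|z|^2)|g'|>C\varepsilon_0$ on infinitely many \emph{full} annuli; every Stolz cone crosses all of them, so the area function is infinite at \emph{every} $\xi\in\partial\D$ (stronger than your positive-measure set $E_{\varepsilon_0}$). Off the fattened annuli $A(M)$ one gets $(1-|z|^2)|g'|\le C(1/M+Me^{-M})$, and each geodesic meets each annulus in hyperbolic length at most $2\log M$, which gives \eqref{AMaincond} exactly along the lines you sketched. So your frame is correct and matches the paper's, but the decisive construction is missing, and the harmonicity heuristic that made you expect it to be delicate is mistaken -- a single lacunary monomial per scale already does the job.
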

\noindent 
It is worth mentioning that \thmref{T1} and Theorem \ref{MThm3} indicate that harmonicity is not preserved in Corollary \ref{closure}. That is, there exists a $B_2$-weight $w$ with the property that $\log w$ is harmonic in $\D$, but the $L^{\infty}(\D)$-approximates in (i) or (ii) of \thmref{T1} are not harmonic. In other words, there exists $g \in \B$ which belongs to the closure of $L^\infty (\D)$ in $BMO(\D)$ and in $\textup{HLip}(\D)$, but it is not in the closure of $H^\infty$ in any of these (semi-)norms. \\ 

 %We remark, that the results of Theorem \ref{T1}  and \ref{closure} can be rephrased in terms of "essentially  constant" $B_{\infty}$-weights. Again, we refer the reader to \cite{AlePottReg} for further details on this matter.
\noindent
The paper is organized as follows. Several auxiliary results which may be of independent interest, are collected in Section 2. Section 3 contains the proofs of  \thmref{T1}, Corollary \ref{closure} and \thmref{MThm3}. Finally, in section 4, we briefly mention some applications to the spectra of Cesar\'o operators on the Bergman space. The letter $C$ will denote an absolute positive constant whose value may change from line to line and $C(w)$ will denote a constant depending on $w$.  

% When convenient, we will use the notation $E_{1} \lesssim E_{2}$ for two real-valued quantities $E_{1},E_{2}$, if there exists a constant $C>0$, such that $E_{1} \leq C \cdot E_{2}$. For quantities satisfying $E_{1} \lesssim E_{2} \lesssim E_{1}$, we use the abbreviation $E_{1} \sim E_{2}$.
 
 \section{Preliminary Results.
 }
 
 We start with an auxiliary Lemma whose proof is a tailor-made version of a more general result which can be found in \cite{AleCon} (See Lemma 2.1).
%
%--------------LEMMA------------------
%
\begin{lemma} \label{B1*}Let $w$ be a $B_{2}$-weight on $\D$ of bounded hyperbolic oscillation.  
Then there exists a constant $C(w)>0$ such that
\begin{equation*}\label{B1*est} \int_{\D} \left(w\circ\phi_{z}\right)(\zeta) dA(\zeta) \leq C(w) w(z), \qquad \,  z\in\D.
\end{equation*} 
\end{lemma}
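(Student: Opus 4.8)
The plan is to exploit the conformal invariance of the whole configuration in order to reduce the assertion to the single point $z=0$, and then to read the bound off directly from the $B_2$ condition applied to the full disc. Concretely, fix $z \in \D$ and set $v = w \circ \phi_z$. Since $\phi_z$ is an involution with $\phi_z(0) = z$, we have $v(0) = w(z)$ and $\int_\D (w\circ\phi_z)\,dA = \int_\D v\,dA$, so the claim becomes $\int_\D v\,dA \leq C(w)\,v(0)$. The function $v$ inherits both hypotheses: the Möbius invariance of $\beta$ (noted right after \eqref{WCHD}) gives $|\log v(\zeta_1) - \log v(\zeta_2)| = |\log w(\phi_z \zeta_1) - \log w(\phi_z\zeta_2)| \leq C(w)(1 + \beta(\zeta_1,\zeta_2))$, so $v$ has bounded hyperbolic oscillation with the same constant; and the Bekoll\'e--Bonami class is Möbius invariant, so $[v]_{B_2} \leq C\,[w]_{B_2}$ with $C$ absolute, uniformly in $z$.

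With this reduction in hand I would argue in three short steps. First, apply the inequality \eqref{B2weight} to $v$ on the Carleson square associated to $I = \partial\D$, namely $Q_{\partial\D} = \D\setminus\{0\}$, which has $A(Q_{\partial\D}) = 1$; this yields $\left(\int_\D v\,dA\right)\left(\int_\D v^{-1}\,dA\right) \leq [v]_{B_2}^2$. Second, bound $\int_\D v^{-1}\,dA$ from below: on the fixed disc $D_0 = \{|\zeta| \leq 1/2\}$ the quantity $\beta(\zeta,0)$ is bounded by an absolute constant $\beta_0$, so bounded hyperbolic oscillation gives $v(\zeta) \leq v(0)\,e^{C(w)(1+\beta_0)}$ and hence $\int_\D v^{-1}\,dA \geq \int_{D_0} v^{-1}\,dA \geq A(D_0)\,e^{-C(w)(1+\beta_0)}\, v(0)^{-1}$. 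Third, combine the two: $\int_\D v\,dA \leq [v]_{B_2}^2 \left(\int_\D v^{-1}\,dA\right)^{-1} \leq 4\,[v]_{B_2}^2\, e^{C(w)(1+\beta_0)}\, v(0)$. Tracing the constants back through the reduction produces a bound of the form $C(w)\,w(z)$ with $C(w)$ depending only on $[w]_{B_2}$ and the oscillation constant, and crucially independent of $z$.

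The one genuinely delicate point, and where I expect the real work to lie, is the conformal invariance of the $B_2$ characteristic invoked in the reduction, i.e. the uniform estimate $[w\circ\phi_z]_{B_2} \leq C\,[w]_{B_2}$: Carleson squares are not preserved by the automorphisms $\phi_z$, so one must pass through the equivalent formulation of \eqref{B2weight} over pseudohyperbolic (Bergman) discs, which is manifestly invariant, and verify that the two characteristics are comparable up to absolute constants. If one prefers to keep the argument self-contained and avoid this invariance, the same conclusion is reachable directly: after the change of variables $u = \phi_z(\zeta)$ the target integral becomes $\int_\D w(u)\,(1-|z|^2)^2\,|1 - \bar z u|^{-4}\,dA(u)$, whose kernel is a probability density on $\D$; one then runs a Whitney decomposition of $\D$ into Carleson squares shrinking toward $z/|z|$, estimates the average of $w$ on each by \eqref{B2weight} together with the lower bound for $v^{-1}$ above, and sums against the decay of the kernel. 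This is precisely the tailor-made adaptation of Lemma~2.1 of \cite{AleCon}, but it is computationally heavier; granting the invariance, the reduction above is the cleaner route.
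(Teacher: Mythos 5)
Your reduction to $z=0$ is elegant, and the surrounding steps are sound: the oscillation hypothesis \eqref{WCHD} is conformally invariant, applying \eqref{B2weight} with $I=\partial\D$ (or arcs increasing to $\partial\D$, by monotone convergence) gives $\big(\int_\D v\,dA\big)\big(\int_\D v^{-1}\,dA\big)\leq [v]_{B_2}^2$, and the lower bound for $\int_\D v^{-1}dA$ on $\{|\zeta|\leq 1/2\}$ is correct. But the ingredient you yourself flag as the crux --- the uniform bound $\sup_{z}[w\circ\phi_z]_{B_2}<\infty$ --- is not established by either of the routes you propose, and this is a genuine gap. The claimed equivalence of the Carleson-square characteristic \eqref{B2weight} with a characteristic over pseudohyperbolic discs is false: a disc of fixed hyperbolic radius sees only one scale, so \emph{every} weight of bounded hyperbolic oscillation satisfies the disc condition with constant depending only on $C(w)$ in \eqref{WCHD}; in particular $w(z)=(1-|z|^2)^a$ satisfies it for all $a\in\R$, while it belongs to $B_2$ only for $|a|<1$ (for $a\geq 1$ the averages of $w^{-1}$ over Carleson squares are infinite). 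Thus the disc characteristic cannot control the square characteristic, only the converse implication holds. This is exactly the lack of self-improvement of Bekoll\'e--Bonami weights that the paper points out with reference to \cite{Bor}.

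Your fallback direct argument does not close either. Splitting $\D$ into the regions $R_k$ where $|1-\conj{z}u|\approx 2^k(1-|z|)$ and using \eqref{B2weight} on a Carleson square $Q_k\supset R_k$ containing $z$, the best available bound is $\frac{1}{A(Q_k)}\int_{Q_k}w\,dA\lesssim [w]_{B_2}^2 e^{C(w)}2^{2k}w(z)$ (via the dual average tested on the small disc around $z$), which makes each term of the sum comparable to $w(z)$ uniformly in $k$; since there are about $\log\frac{1}{1-|z|}$ scales, the sum diverges logarithmically as $|z|\to 1$. Using only \eqref{WCHD} instead gives terms $\approx 2^{k(C(w)/2-2)}w(z)$, divergent once $C(w)\geq 4$. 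Gaining the needed factor $2^{-\delta k}$ is precisely a reverse-H\"older property, available in this class only through the $B_\infty$ machinery of \cite{AlePottReg}. The paper sidesteps all of this by using the operator-theoretic content of $B_2$: by Bekoll\'e--Bonami \cite{BeBo}, $w\in B_2$ iff $P$ is bounded on $L^2(\D,w\,dA)$, and the mean-value identity $\frac{1}{(1-\conj{z}\zeta)^2}=P(\chi(z,\cdot))(\zeta)$, with $\chi(z,\cdot)$ the normalized indicator of the disc of radius $\frac12(1-|z|^2)$ about $z$, turns the integral in \eqref{wcompz} directly into $C(w)\frac{1}{A(D(z))}\int_{D(z)}w\,dA\lesssim C(w)w(z)$. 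The conformal invariance you need is the paper's Lemma \ref{BWconinv}, proven by the same operator-theoretic device through the identity $(P(f)\circ\phi_z)\cdot\phi_z'=P((f\circ\phi_z)\cdot\phi_z')$, not by comparing testing regions; its proof is independent of Lemma \ref{B1*}, so your reduction becomes a correct alternative proof if you import that lemma --- but as justified in your proposal, the key step fails. (A minor point: your claim $[w\circ\phi_z]_{B_2}\leq C[w]_{B_2}$ with $C$ absolute is also unsubstantiated; the argument only requires, and Lemma \ref{BWconinv} only provides, a bound $C(w)$ uniform in $z$.)
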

%
%-------------PROOF OF LEMMA----------------------
%
\begin{proof} 
Notice that the change of variable $\zeta \mapsto \phi_{z}(\zeta)$ gives
\begin{equation}\label{wcompz} \int_{\D} \left( w\circ \phi_{z}\right)(\zeta) dA(\zeta) = \int_{\D} \frac{(1-|z|^{2})^{2}}{|1-\conj{z}\zeta|^{4}} w(\zeta) dA(\zeta).
\end{equation}
Let $D(z)$ denote the disc $D(z)= \left\{ \zeta\in \D: |\zeta-z| < \frac{1}{2}(1-|z|^{2})\right\}$ and observe that by the mean-value theorem for harmonic functions, we can write
\[ \frac{1}{(1- \conj{z}\zeta)^{2}} = \fr{1}{A(D(z))} \int_{D(z)}\fr{1}{(1-\conj{\eta}\z)^{2}} dA(\eta) = P(\chi(z,\cdot))(\zeta), \quad z, \zeta \in \D,
\]
%\[ \frac{1}{(1- \conj{z}\zeta)^{2}} = \frac{4}{\pi (1-|z|^{2})^{2}}\int_{|\eta -z| < \frac{1}{2}(1-|z|^{2})} \frac{1}{(1-\conj{\eta} \zeta)^{2}} dA(\eta) 
%\] 
where $\chi(z,\eta) = \frac{1}{A(D(z))} 1_{D(z)}(\eta)$ and $P$ denotes the Bergman projection. Recall that $w\in B_{2}$ if and only if $P:L^{2}(\D,w dA) \rightarrow L^{2}(\D, w dA)$ is bounded, hence there exists a constant $C(w)>0$ only depending on $w$, such that  
\[   \int_{\D} |P(\chi(z,\cdot)|^{2} w dA \leq C(w)  \int_{\D} |\chi(z, \cdot) |^{2} w dA, \qquad  \,  z\in \D.
\]
Going back to  (\ref{wcompz}) with these observations in mind, we obtain
\begin{equation}\label{b1*}\int_{\D} \left( w\circ \phi_{z}\right)(\zeta) dA(\zeta) \leq C(w) \frac{1}{A(D(z))} \int_{D (z)}w(\zeta) dA(\zeta).
\end{equation}
Using the fact that $w$ is of bounded hyperbolic oscillation, the rightmost integral in (\ref{b1*}) is comparable to $w(z)$, which completes the proof.
\end{proof}
%
%-------END LEMMA---------
%
\noindent Our next result establishes the conformal invariance of Bekoll\'e-Bonami weights.
%
%-------------------------Conformal invariance of B_2 -weights----------------------------
%
\begin{lemma}\label{BWconinv}The collection of $B_{2}$-weights is conformally invariant. In fact, if $w$ is a $B_{2}$-weight, there exists a constant $C(w)>0$, such that $ \left[ w\circ \phi_{z} \right]_{B_{2}}   \leq C(w)$ for any $z \in \mathbb{D}$. 
% then the family of weights $\left\{w\circ \phi_{z} \right\}_{z \in \D}$ are $B_{2}$-weights, with the same constant as $w$, independent of $z\in \D$.
\end{lemma}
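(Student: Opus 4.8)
A direct attack through the defining averages is awkward, since $\phi_a$ does not map Carleson squares to Carleson squares. Instead the plan is to reduce the statement to the conformal covariance of the Bergman projection $P$, using the Bekoll\'e--Bonami characterization already invoked in Lemma~\ref{B1*}: a weight $u$ lies in $B_2$ precisely when $P$ is bounded on $L^2(\D,u\,dA)$. The device is the weighted composition operator $T_a f = (f\circ\phi_a)\,\phi_a'$, $a\in\D$. First I would record its mapping properties. Since $\phi_a$ is holomorphic, its real Jacobian equals $|\phi_a'|^2$, so the change of variables $\eta=\phi_a(z)$ shows that $T_a$ is an isometry of $L^2(\D,w\,dA)$ onto $L^2(\D,(w\circ\phi_a)\,dA)$; because $\phi_a\circ\phi_a=\mathrm{id}$ one has $T_a^2=I$, so $T_a$ is a surjective isometry between the two weighted spaces, equal to its own inverse.

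The heart of the argument is the operator identity $PT_a=T_aP$ on $L^2(\D,dA)$. Performing the change of variables $\eta=\phi_a(\zeta)$ inside the integral defining $PT_af$, this reduces to the pointwise kernel identity
\[
\frac{\overline{\phi_a'(\eta)}}{(1-\overline{\phi_a(\eta)}\,z)^2}=\frac{\phi_a'(z)}{(1-\overline{\eta}\,\phi_a(z))^2},\qquad \eta,z\in\D,
\]
which follows from $\phi_a'(z)=-(1-|a|^2)(1-\overline{a}z)^{-2}$, both sides reducing to $-(1-|a|^2)(1-a\overline{\eta}-\overline{a}z+\overline{\eta}z)^{-2}$. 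I would not grind through this beyond checking that the two numerators coincide.

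With these two ingredients the transfer of boundedness is immediate. Since $w\in B_2$, the operator $P$ is bounded on $L^2(\D,w\,dA)$ with some norm $N(w)$. Given $g\in L^2(\D,(w\circ\phi_a)\,dA)$, write $g=T_af$ with $f=T_ag\in L^2(\D,w\,dA)$; the commutation together with the isometry property of $T_a$ applied twice gives
\[
\|Pg\|_{L^2((w\circ\phi_a)\,dA)}=\|T_aPf\|_{L^2((w\circ\phi_a)\,dA)}=\|Pf\|_{L^2(w\,dA)}\le N(w)\,\|g\|_{L^2((w\circ\phi_a)\,dA)}.
\]
Thus $P$ is bounded on $L^2(\D,(w\circ\phi_a)\,dA)$ with norm at most $N(w)$, uniformly in $a$. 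Finally I would invoke the necessity direction of the Bekoll\'e--Bonami theorem, which furnishes a universal increasing function $\Psi$ with $[u]_{B_2}\le\Psi\!\left(\|P\|_{L^2(\D,u\,dA)}\right)$ for every weight $u$; applied to $u=w\circ\phi_a$ this yields $[w\circ\phi_a]_{B_2}\le\Psi(N(w))=:C(w)$ for all $a\in\D$.

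The main obstacle is conceptual rather than computational: the equivalence between membership in $B_2$ and boundedness of $P$ is a priori only qualitative, so conformal invariance of the class does not by itself deliver a uniform bound on the characteristics $[w\circ\phi_a]_{B_2}$. The point is to run the entire argument at the level of operator norms, which are genuinely preserved under the unitary conjugation by $T_a$, and to convert back to the $B_2$-characteristic only at the very end through the quantitative necessity estimate. Verifying $PT_a=T_aP$ is the one place where a concrete computation is needed, but it is routine once the kernel identity is isolated.
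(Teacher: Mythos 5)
Your proposal is correct and takes essentially the same route as the paper: the paper likewise reduces the claim, via the Bekoll\'e--Bonami theorem, to a $z$-uniform bound for the Bergman projection on $L^{2}(\D,(w\circ\phi_{z})\,dA)$, and its identity \eqref{B2confid} is exactly your commutation relation $PT_{a}=T_{a}P$, established from the same kernel identity and the same change of variables $\eta\mapsto\phi_{z}(\eta)$ that constitutes your isometry statement for $T_{a}$. Your closing remark that the necessity direction of Bekoll\'e--Bonami must be used quantitatively (operator norm controlling the $B_{2}$-characteristic) makes explicit a point the paper leaves implicit, but the argument is the same.
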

\begin{proof}According to the well-known result of Bekoll\'e-Bonami in \cite{BeBo}, it suffices to prove that there exists a constant $C(w)>0$, independent of $z \in \D$, such that 
\begin{equation}\label{b2}
    \int_{\D} | P(f)(\zeta)|^{2} (w\circ \phi_{z})(\zeta) dA(\zeta) \leq C(w) \int_{\D} |f(\zeta)|^{2} (w\circ \phi_{z})(\zeta) dA(\zeta),
\end{equation}
for all  $f \in L^{2}\left(\D, (w\circ \phi_{z})dA\right)$, $z\in \D$. Notice that the change of variable $\zeta \mapsto \phi_{z}(\zeta)$ gives 
\begin{equation}\label{B2coninv1} \int_{\D} | P(f)|^{2} (w\circ \phi_{z}) dA= \int_{\D} |\left(P(f) \circ \phi_{z}\right)  \cdot \phi_{z}'|^{2} \, w dA.
\end{equation}
We now claim that for any $z,\z \in \D$, the following identity holds:
\begin{equation}\label{B2confid} \left(P(f) \circ \phi_{z}\right) (\zeta) \cdot \phi_{z}'(\zeta) = P \left( \left(f\circ \phi_{z}\right) \cdot \phi_{z}' \right) (\zeta).
\end{equation}
To prove this, we primarily observe that a straightforward calculation shows 
\[ \frac{\phi_{z}'(\z)}{(1-\overline{\eta}\phi_{z}(\z))^{2}} = \frac{\overline{\phi_{z}'(\eta)}}{(1-\overline{\phi_{z}(\eta)}\zeta)^{2}}.
\]
Now using this, we can write 
\begin{equation*} 
\begin{split} \left(P(f) \circ \phi_{z}\right) (\zeta) \cdot \phi_{z}'(\zeta) = \int_{\D} \frac{f(\eta) \cdot \phi'_{z}(\zeta)}{(1-\conj{\eta}\phi_{z}(\zeta))^{2}}dA(\eta) %= -\int_{\D} \frac{f(\eta)(1-|z|^{2})}{(1-\conj{z}\zeta - \conj{\eta}(z-\zeta))^{2}} dA(\eta)  \\
%= %- \int_{\D} \frac{f(\eta)(1-|z|^{2})}{(1-\conj{\eta}z - (\conj{z}-\conj{\eta})\zeta )^{2}} dA(\eta) 
=  \int_{\D} \frac{f(\eta) \cdot \conj{\phi'_{z}(\eta)}}{(1-\conj{\phi_{z}(\eta)}\zeta)^{2}}dA(\eta).
\end{split}
\end{equation*}
Performing the change of variable $\eta \mapsto \phi_{z}(\eta)$ in the last integral, we can simplify it as  
\[ \int_{\D} \frac{\left( f\circ \phi_{z}\right)(\eta) \cdot \conj{\phi_{z}'(\phi_{z}(\eta))} \cdot |\phi_{z}'(\eta)|^{2} }{(1-\conj{\eta}\zeta)^{2}} dA(\eta) = %\int_{\D} \frac{\left( f\circ \phi_{z}\right)(\eta) \cdot \phi_{z}'(\eta) }{(1-\conj{\eta}\zeta)^{2}} dA(\eta) = 
P \left( \left(f\circ \phi_{z}\right) \cdot \phi_{z}' \right) (\zeta).
\]
This establishes the identity in (\ref{B2confid}). Now using this together with the result of Bekoll\'e-Bonami, we can find a constant $C(w)>0$, such that 
\[ \int_{\D} |P \left( \left(f\circ \phi_{z}\right) \cdot \phi_{z}' \right) (\zeta)|^{2} w(\zeta) dA(\zeta) \leq C(w) \int_{\D}  |\left( f\circ \phi_{z}\right) (\zeta) \cdot \phi_{z}'(\zeta) |^{2} w(\zeta) dA(\zeta) 
\] 
\[ = C(w) \int_{\D} |f(\zeta)|^{2} \left( w\circ \phi_{z}\right)(\zeta) dA(\zeta).
\]
In the last step, we again utilized the change of variable $\zeta \mapsto \phi_{z}(\zeta)$. According to \eqref{B2coninv1} and \eqref{B2confid}, we finally obtain
\begin{equation}\label{ab2}\int_{\D}|P(f)|^{2}\left( w\circ \phi_{z} \right) dA \leq C(w) \int_{\D} |f|^{2} \left(w\circ \phi_{z}\right)dA,
\end{equation}
for all $f\in L^{2}\left(\D, (w\circ\phi_{z})dA\right)$, %Now iterating \eqref{B2confid} by applying it to $f\mapsto P(f)$ and recalling that $P(f) \in L^{2}\left(\D, (w\circ\phi_{-z})dA\right)$, whenever $f \in L^{2}\left(\D, (w\circ\phi_{z})dA\right)$ and $P(P(f))=P(f)$, we ultimately arrive at 
%\[ \int_{\D}|P(f)|^{2}\left( w\circ \phi_{z} \right) dA \leq C^{2}(w) \int_{\D} |f|^{2} \left(w\circ \phi_{z}\right)dA,
%\]
which proves the desired claim in \eqref{b2}.
\end{proof}
%
%------END LEMMA-------
%
\noindent Next, we state an elementary result for future reference. Given a Carleson square $Q=Q(I) \subset \D $, we denote by $z_Q$ the point $z_Q = (1 - m(I)) \xi (I)$, where $\xi (I)$ is the center of the arc $I \subset \partial \D$.  
%
%----------BEGIN LEMMA------
%
\begin{lemma}\label{LipB2}
Let $v:\D \rightarrow \R$ be a hyperbolic Lipschitz function with $\|v \|_{\text {HLip}} < 1$. Then $e^{2v}  \in B_2$. 
\end{lemma}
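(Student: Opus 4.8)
The plan is to estimate the Bekoll\'e--Bonami characteristic $\left[e^{2v}\right]_{B_{2}}$ directly, one Carleson square at a time, with the strict inequality $\|v\|_{\text{HLip}}<1$ entering only at the very last step. Write $w=e^{2v}$ and set $s=\|v\|_{\text{HLip}}<1$. Fix an arc $I$ with associated Carleson square $Q_{I}$ and top point $z_{Q}$. The hyperbolic Lipschitz bound \eqref{HypHLip} gives $|v(z)-v(z_{Q})|\leq s\,\beta(z,z_{Q})$ for every $z\in Q_{I}$, so factoring out the constants $e^{\pm 2v(z_{Q})}$ yields
\[
\frac{1}{A(Q_{I})}\int_{Q_{I}} e^{2v}\,dA \;\leq\; e^{2v(z_{Q})}\,\frac{1}{A(Q_{I})}\int_{Q_{I}} e^{2s\beta(z,z_{Q})}\,dA(z),
\]
together with the analogous bound for $e^{-2v}$ carrying $e^{-2v(z_{Q})}$. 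Multiplying the two averages, the factors $e^{\pm 2v(z_{Q})}$ cancel, and the product is dominated by $\big(\frac{1}{A(Q_{I})}\int_{Q_{I}} e^{2s\beta(z,z_{Q})}\,dA\big)^{2}$. It therefore suffices to bound $\frac{1}{A(Q_{I})}\int_{Q_{I}} e^{2s\beta(z,z_{Q})}\,dA$ uniformly in $I$, by a constant depending only on $s$.

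To control this remaining integral I would pass from the hyperbolic metric to the pseudohyperbolic quantity. From the definition of $\beta$ and the identity $1-|\phi_{z}(\zeta)|^{2}=(1-|z|^{2})(1-|\zeta|^{2})/|1-\overline{z}\zeta|^{2}$, one obtains $e^{2\beta(z,\zeta)}\leq 2\,|1-\overline{z}\zeta|^{2}/\big((1-|z|^{2})(1-|\zeta|^{2})\big)$, and hence
\[
e^{2s\beta(z,z_{Q})}\;\leq\; 2^{s}\left(\frac{|1-\overline{z_{Q}}z|^{2}}{(1-|z|^{2})(1-|z_{Q}|^{2})}\right)^{s}.
\]
By rotation invariance assume $I$ is centered at $1$ and set $\ell=m(I)$, so that $z_{Q}=1-\ell$ and $1-|z_{Q}|^{2}\approx\ell$. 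A direct geometric check shows that $|1-\overline{z_{Q}}z|\approx\ell$ uniformly for $z\in Q_{I}$, while $1-|z|^{2}\approx 1-|z|$ ranges over $(0,\ell)$. Writing $z=re^{i\theta}$ in polar coordinates and using $A(Q_{I})\approx\ell^{2}$, the normalized integral reduces, up to constants depending only on $s$, to $\ell^{-1}\int_{0}^{\ell}(\ell/u)^{s}\,du$ where $u=1-r$.

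The crux of the argument, and the only place where $s<1$ is used, is the convergence of $\int_{0}^{\ell}u^{-s}\,du=\ell^{1-s}/(1-s)$: for $s<1$ this is finite and gives the uniform bound $\ell^{-1}\int_{0}^{\ell}(\ell/u)^{s}\,du\approx(1-s)^{-1}$, independent of $\ell$, which completes the estimate and yields $\left[w\right]_{B_{2}}^{2}\lesssim (1-s)^{-2}$. For $s=1$ the same integral diverges logarithmically, which is exactly why the borderline case must be excluded; this mirrors the classical fact that the critical exponent fails the Muckenhoupt, and here the Bekoll\'e--Bonami, condition. The only genuine technical care needed is the uniform two-sided comparison $|1-\overline{z_{Q}}z|\approx\ell$ on $Q_{I}$, which must be verified both for small squares and for squares with $m(I)$ close to $1$ (where $z_{Q}$ is near the origin); once this is in hand, the one-dimensional integral estimate is routine.
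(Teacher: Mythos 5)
Your proposal is correct and follows essentially the same route as the paper's proof: both estimate square by square relative to the top point $z_Q$, convert $\beta(z,z_Q)$ into the factor $\left(\frac{1-|z_Q|}{1-|z|}\right)^{s}$ (you via the identity for $1-|\phi_{z}(\zeta)|^{2}$ together with $|1-\overline{z_Q}z|\approx m(I)$, the paper by quoting the standard comparison $\left|\beta(z,z_Q)-\frac{1}{2}\log\frac{1-|z_Q|}{1-|z|}\right|\leq M$), and both reduce to the one-dimensional integral $\int_{0}^{\ell}u^{-s}\,du$, whose convergence is exactly where $s=\|v\|_{\text{HLip}}<1$ is used. The only cosmetic difference is that you track the explicit constant $\left[e^{2v}\right]_{B_2}^{2}\lesssim(1-s)^{-2}$, which the paper leaves implicit.
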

\begin{proof}
Note that there exists a constant $M>0$ such that for any Carleson square $Q$ and any $z \in Q$, we have  
\begin{equation*}
    \left|\beta (z, z_Q) - \frac{1}{2} \log \frac{1-|z_Q|}{1-|z|} \right| \leq M . 
\end{equation*}
Hence for any Carleson square $Q$ and any $z \in Q$, we can write 
\begin{equation}\label{equacio}
e^{-M} \left(\fr{1-|z|}{1-|z_Q|} \right)^{\|v \|_{\text {HLip}}/2} \leq e^{v(z) - v(z_Q)} \leq e^M  \left(\fr{1- |z_Q|}{1-|z|} \right)^{ \|v \|_{\text {HLip}} /2}.  
\end{equation}
Since $\|v \|_{\text {HLip}} < 1$, there exists a constant $C= C(\|v \|_{\text {HLip}} ) >0$, such that
\[ \int_Q \left(\fr{1- |z_Q|}{1-|z|} \right)^{ \|v \|_{\text {HLip}} } \leq C  A(Q) . 
\]
% and 
% \[ \int_Q \left(\fr{1- |z|}{1-|z_Q|} \right)^{ \|v \|_{\text {HLip}} } \geq C^{-1} A(Q)
% \]
We deduce that 
\[ \frac{1}{A(Q)} \int_Q e^{2v} dA \leq C e^{2M} e^{ 2 v(z_Q)}. 
\]
Using the left inequality in \eqref{equacio} we also obtain
\[ \frac{1}{A(Q)} \int_Q e^{-2v} dA \leq C e^{2M} e^{- 2v(z_Q)}. 
\]
This finishes the proof. 
\end{proof}
%
%-------END LEMMA------
%
\noindent %Since the Lebesgue measure restricted to $\D$ is a doubling measure, 
Functions in $BMO(\D)$ satisfy the John-Nirenberg inequality. That is, there exist universal constants $C_1 >0$ and $C_2 >0$ such that for any $f \in BMO (\D)$ and any disc $D$ centered at a point in $\D$, we have
\begin{equation}\label{J-N}
    \frac{A \left( \left\{ z \in D \cap \D : |f(z) - f_{D \cap \D}| > \lambda \right\} \right)}{A (D \cap \D)} \leq C_1 e^{-C_2 \lambda / \|f\|_{BMO(\D) } }, \quad \lambda >0 . 
\end{equation} 
%See \cite{MMNO} and the references therein for further details on this matter. 
Let us recall the well known relation between $BMO(\partial \D)$ and Muckenhoupt weights on $\partial \D$. If $w$ is a Muckenhoupt $A_2$-weight, then $\log w \in BMO (\partial \D)$. Conversely there exists a universal constant $c>0$ such that if $f \in BMO(\partial \D)$ with $\|f\|_{BMO(\partial \D)} = 1$,  then $e^{cf}$ is a $A_2$-weight. Our next result establishes an analogue for Bekoll\'e-Bonami weights. 
%
%----BEGIN LEMMA------------
%
\begin{lemma}\label{BB-BMO}
\begin{itemize}
\item[(i)]
There exist absolute constants $C_1>0$, $C_2 >0$, such that 
\begin{equation*}
    [ e^{C_2 f / \|f\|_{BMO (\D)}} ]_{B_2} \leq C_1 , 
\end{equation*}
for any $f \in BMO (\D)$. 
\item[(ii)] For any $f \in VMO (\D)$, we have 
\begin{equation*} \lim_{\delta \rightarrow 0+} \sup_{A(Q)< \delta} \left(\frac{1}{A(Q)}\int_{Q}e^{f}dA \right) \left(\frac{1}{A(Q)} \int_{Q} e^{-f} dA \right) =1,
\end{equation*}
where the supremum is taken over all Carleson squares $Q \subset \D$.
\item[(iii)] For any $w\in B_{2}$ and any $\la >  2 + \log \left[ w\right]_{B_{2}} $,  one has 
\begin{equation}\label{expdec}
   A(\{z \in Q : |\log w(z) - (\log w)_Q | > \la \} ) \leq 2 e^2 \left[w \right]_{B_{2}}^2 e^{-\la}A(Q),
\end{equation}
for any Carleson square $Q \subset \D$. Moreover, if $w \in B_2$ is of bounded hyperbolic oscillation, then $\log w \in BMO (\D)$.
\end{itemize}
\end{lemma}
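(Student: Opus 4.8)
The three parts rest on a single geometric observation that lets one pass between the disc-based space $BMO(\D)$ and the Carleson-square averages appearing in the $B_2$ condition \eqref{B2weight}. The plan is first to record that every Carleson square $Q=Q_I$ with $m(I)=h\le 1/2$ is contained in a Euclidean disc $D$ centered at a point of $\D$ with $A(D\cap\D)\le K\,A(Q)$ for an absolute $K$ (both sets have area comparable to $h^2$), and conversely that every disc $D$ touching $\partial\D$ sits inside a Carleson square of comparable area. From the first containment and the definition of $BMO(\D)$ one gets $|f_Q-f_{D\cap\D}|\le K\|f\|_{BMO(\D)}$, and feeding this into the disc John--Nirenberg inequality \eqref{J-N} yields the Carleson-square John--Nirenberg estimate
\[
\frac{A(\{z\in Q:\, |f(z)-f_Q|>\lambda\})}{A(Q)}\le C_1'\,e^{-C_2'\lambda/\|f\|_{BMO(\D)}},\qquad \lambda>0,
\]
with absolute $C_1',C_2'$ (the range $\lambda\lesssim\|f\|_{BMO(\D)}$ is absorbed by enlarging $C_1'$). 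This estimate is the engine for parts (i) and (ii).

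For (i) I would normalize $\|f\|_{BMO(\D)}=1$ and note that, after centering, the $B_2$-quotient over $Q$ equals $\big(\tfrac1{A(Q)}\int_Q e^{C_2(f-f_Q)}\big)\big(\tfrac1{A(Q)}\int_Q e^{-C_2(f-f_Q)}\big)$, since the factors $e^{\pm C_2 f_Q}$ cancel. Bounding each factor by $\tfrac1{A(Q)}\int_Q e^{C_2|f-f_Q|}$ and expanding by the layer-cake formula against the Carleson-square John--Nirenberg estimate gives $\tfrac1{A(Q)}\int_Q e^{C_2|f-f_Q|}\le 1+C_2C_1'/(C_2'-C_2)$ whenever $C_2<C_2'$; choosing $C_2=C_2'/2$ makes this a uniform constant and squaring yields $[e^{C_2 f}]_{B_2}\le C_1$. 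For (ii) the lower bound $\ge 1$ is immediate from Cauchy--Schwarz, so only $\limsup\le 1$ needs work. Here I would use that $f\in VMO(\D)$ is a $BMO(\D)$-limit of polynomials: given $\varepsilon>0$ write $f=p+r$ with $p$ uniformly continuous on $\overline{\D}$ and $\|r\|_{BMO(\D)}<\varepsilon$. On a square $Q$ of small diameter one has $|p-p_Q|\le\omega_p(\operatorname{diam}Q)$ with $\omega_p\to 0$, while the Carleson-square John--Nirenberg estimate applied to $r$ (with its small norm) bounds $\tfrac1{A(Q)}\int_Q e^{\pm(r-r_Q)}$ by $1+C\varepsilon$; multiplying, each factor $\tfrac1{A(Q)}\int_Q e^{\pm(f-f_Q)}$ is at most $e^{\omega_p(\operatorname{diam}Q)}(1+C\varepsilon)$, and letting the square shrink and then $\varepsilon\to 0$ forces the product down to $1$.

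Part (iii) is independent of the above and is the cleanest. Writing $u=\log w$, with $u_Q$ its $A$-average over $Q$ and $\langle\cdot\rangle_Q$ the $A$-average, Jensen's inequality gives $e^{u_Q}\le\langle w\rangle_Q$ and $e^{-u_Q}\le\langle w^{-1}\rangle_Q$, which combined with $\langle w\rangle_Q\langle w^{-1}\rangle_Q\le[w]_{B_2}^2$ yields $\langle w\rangle_Q e^{-u_Q}\le[w]_{B_2}^2$ and $\langle w^{-1}\rangle_Q e^{u_Q}\le[w]_{B_2}^2$. Chebyshev's inequality applied to $\{w>e^{u_Q+\lambda}\}$ and to $\{w^{-1}>e^{\lambda-u_Q}\}$ then bounds each of $A(\{u-u_Q>\lambda\})$ and $A(\{u-u_Q<-\lambda\})$ by $[w]_{B_2}^2e^{-\lambda}A(Q)$, giving \eqref{expdec} (in fact with the sharper constant $2$ in place of $2e^2$ and for all $\lambda>0$). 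Integrating \eqref{expdec} through the layer-cake formula shows the mean oscillation of $\log w$ over every Carleson square is bounded by an absolute multiple of $2+\log[w]_{B_2}+[w]_{B_2}$; combining this with the disc/Carleson-square comparison for boundary discs and using \eqref{WCHD} to control oscillation on discs lying deep in $\D$ (where a short computation reduces matters to $\int_0^1|\log\sigma|\,d\sigma<\infty$) gives $\log w\in BMO(\D)$.

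I expect the main obstacle to be the uniform disc $\leftrightarrow$ Carleson-square comparison underpinning (i)--(ii) and the ``moreover'' in (iii): one must verify that the containments and area comparisons hold with constants independent of the square, and that the transfer of both the mean oscillation and the John--Nirenberg distribution function survives passing between the two families of sets. Once this comparison is in hand, the remaining steps are the standard centering, layer-cake, and Chebyshev computations.
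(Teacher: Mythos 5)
Your proposal is correct, and parts (i) and (ii) follow the paper's proof essentially verbatim: the same containment of a Carleson square $Q$ in a disc $D$ with $A(D)\leq K\,A(Q)$ and $|f_Q-f_{D\cap\D}|\leq K\|f\|_{BMO(\D)}$, the same transfer of the John--Nirenberg inequality \eqref{J-N} to Carleson squares followed by the layer-cake expansion for (i), and the same splitting $f=p+r$ with $p$ a polynomial, $\|r\|_{BMO(\D)}$ small, controlling $p$ by its modulus of oscillation $\omega_p(Q)$ and $r$ by John--Nirenberg for (ii). Where you genuinely depart from the paper is the distribution estimate in (iii): the paper first applies Chebyshev centered at $\log(w_Q)$, obtaining $A(\{z\in Q: |\log w-\log (w_Q)|>\la\})\leq 2[w]_{B_2}e^{-\la}A(Q)$ for $\la>\log[w]_{B_2}$, and then pays for recentering at $(\log w)_Q$ by a layer-cake bound $|(\log w)_Q-\log(w_Q)|\leq 2+\log[w]_{B_2}$ together with a set inclusion, which is what produces the factor $2e^2[w]_{B_2}^2$ and the restriction $\la>2+\log[w]_{B_2}$. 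Your route centers at $(\log w)_Q$ from the start via Jensen, $e^{u_Q}\leq\langle w\rangle_Q$ and $e^{-u_Q}\leq\langle w^{-1}\rangle_Q$, and a single Chebyshev application then gives $A(\{z\in Q:|u-u_Q|>\la\})\leq 2[w]_{B_2}^2e^{-\la}A(Q)$ for \emph{all} $\la>0$; this is cleaner, carries a sharper constant, and trivially implies \eqref{expdec}, so the paper's statement follows a fortiori. (Jensen is legitimate here since $|\log w|\leq w+w^{-1}\in L^1(Q)$.) For the ``moreover'' clause your split into deep discs and boundary discs is the paper's; one small simplification is available: for a disc of radius at most $(1-|z|)/2$ centered at $z$, condition \eqref{WCHD} gives a uniform bound $|\log w-\log w(z)|\leq C(w)$ directly, since $\beta(z,\cdot)$ is bounded on such discs, so your appeal to $\int_0^1|\log\sigma|\,d\sigma<\infty$ is only needed if you allow radii close to $1-|z|$ --- harmless, but avoidable.
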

%
%----- END LEMMA------------
%
\noindent 
It is worth mentioning that in contrast to the John-Nirenberg inequality \eqref{J-N}, no constant depending on $w$ appears in the exponent of \eqref{expdec}. Roughly speaking, this indicates that the logarithm of a $B_2$ weight behaves as a function of fixed $BMO$-norm on Carleson squares. 
%
%----------BEGIN PROOF-----------------
%
\begin{proof}
(i) We can assume $\|f\|_{BMO (\D)} = 1$. Observe that for any Carleson square $Q$ there exists a disc $D$ centered at a point in $\D$ such that $Q \subset D$ and $A(D) \leq 4 A(Q)$. Note that 
\begin{equation*}
    |f_{D \cap \D} - f_Q| \leq \frac{1}{A(Q)} \int_Q |f - f_{D \cap \D} | dA \leq  \frac{4}{A(D)} \int_{D \cap \D} |f - f_{D \cap \D} | dA \leq 4.
\end{equation*}
Then the John-Nirenberg inequality \eqref{J-N}  provides two absolute constants $C_1 >0$, $C_2 >0$ such that 
\begin{equation*}
    \frac{1}{A(Q)} \int_Q e^{C_2 |f - f_Q|} dA \leq C_1, 
\end{equation*}
for any Carleson square $Q \subset \D$. Then 
\begin{equation*}
    \frac{1}{A(Q)} \int_Q e^{C_2 (f - f_Q)} dA \leq C_1 \qquad \text{and} \qquad \frac{1}{A(Q)} \int_Q e^{-C_2 (f - f_Q)} dA \leq C_1 . 
\end{equation*}
Hence
\begin{equation*}
     \left( \frac{1}{A(Q)} \int_Q e^{C_2 f} dA  \right) \left(\frac{1}{A(Q)} \int_Q e^{- C_2 f} dA \right) \leq C_1^2. 
\end{equation*}
(ii) Now let $f\in VMO(\D)$ and notice that by the preceding argument in (i), it is enough to show that 
\begin{equation}\label{RedSB2} \lim_{\delta \rightarrow 0+} \sup_{A(Q) < \delta} \frac{1}{A(Q)}\int_{Q}e^{|f(z)-f_{Q}|} dA(z) =1.
\end{equation}
To this end, let $p$ be a polynomial and $q=f-p$. Observe that for any Carleson square $Q$ and $z \in Q$, we have
\[ |f(z)-f_{Q}| \leq |q(z)- q_{Q}| + \omega_{p}(Q).
\]
Here $\omega_{p}(Q):=\sup_{z,\zeta\in Q}|p(z)-p(\zeta)|$ denotes the oscillation of $p$ on $Q$. This gives 
\[ \frac{1}{A(Q)}\int_{Q}e^{|f(z)-f_{Q}|} dA(z) \leq e^{\omega_{p}(Q)} \frac{1}{A(Q)}\int_{Q} e^{|q - q_{Q}|}dA.
\]
We now apply the John-Nirenberg inequality in \eqref{J-N} to $q$, to find absolute constants $c_{1}, c_{2}>0$, such that 
\[ \frac{A \left( \left\{ z \in Q: |q(z) - q_{Q}| > \lambda \right\}\right)}{A (Q)} \leq c_1 e^{-c_2 \lambda / \|q \|_{BMO(\D) } }, \quad \lambda >0 . 
\]
for every Carleson square $Q$. With this at hand and by choosing $p$, such that $\|q \|_{BMO(\D)}< c_{2}/2$, we get
\[ \frac{1}{A(Q)}\int_{Q} e^{|q - q_{Q}|}dA \leq %1 + c_{1}\int_{0}^{\infty} e^{\la} e^{-c_{2}\la / \|f-p\|_{BMO(\D)}}  d\la = 
1 + \frac{c_{1}\|q \|_{BMO(\D)}}{c_{2} - \|q \|_{BMO(\D)}} \leq 1 + \fr{2c_{1}}{c_{2}} \|q \|_{BMO(\D)}
\].
Now combining, we arrive at 
\[ \frac{1}{A(Q)}\int_{Q}e^{|f(z)-f_{Q}|} dA(z) \leq e^{\omega_{p}(Q)} \left(1 + \frac{2c_{1}}{c_{2}}\|q\|_{BMO(\D)}\right).
\]
By uniform continuity of polynomials, it follows $\lim_{\delta \rightarrow 0+} \sup_{A(Q)<\delta} \omega_{p}(Q) =0$. Finally, choosing $p$ to be an approximate of $f$ in $BMO(\D)$ establishes \eqref{RedSB2}, thus (ii) follows. \\ \\
(iii) We first prove the estimate in \eqref{expdec}. Let $Q$ be a Carleson square.  By Chebyshev's inequality, we have for any $t >1$, 
\begin{equation*}
    A \left(\{z \in Q : w(z) > t w_Q \}\right) \leq \frac{A(Q)}{t}. 
\end{equation*}
Since $w^{-1}_Q \leq \left[w\right]_{B_2} / w_Q$, Chebyshev's inequality also gives
\begin{equation*}
    A \left( \{z \in Q : w(z) <  \frac{ w_Q}{t \left[w\right]_{B_2}} \}\right) \leq \frac{A(Q)}{t} , \quad t>1. 
\end{equation*}
Since $\left[w\right]_{B_{2}}\geq 1$, we obtain for any $t>1$
\begin{equation*}
    A \left(\{z \in Q : |\log w(z) - \log (w_Q )| > \log (t \left[w\right]_{B_2}) \}\right) \leq  \frac{2A(Q)}{t} . 
\end{equation*}
Set $\la = \log (t \left[w\right]_{B_2})$ to deduce that for any $\la > \log  \left[w\right]_{B_2} $, we have 
\begin{equation}\label{jn}
    A \left(\{z \in Q : |\log w(z) - \log (w_Q )| > \la \}\right) \leq 2 \left[w \right]_{B_2} e^{- \la} A(Q). 
\end{equation}
Observe that \eqref{jn} gives 
\begin{equation*}
\begin{split}  |(\log w )_{Q} -\log( w_{Q}) | \leq \frac{1}{A(Q)}\int_{Q}| \log w - \log (w_{Q}) | dA \\ =\frac{1}{A(Q)} \int_{0}^{\infty} A \left(\{z \in Q : |\log w(z) - \log (w_Q )| > \la \}\right) d\la \\ = \int_{0}^{\log \left[ w\right]_{B_{2}} } d\la + 2 \left[ w \right]_{B_{2}} \int_{\log \left[ w\right]_{B_{2}} }^{\infty} e^{- \la } d \la =  \log \left[ w\right]_{B_{2}} + 2 : = C(w).
\end{split}
\end{equation*}
With this at hand, we see that whenever $\la > C(w)$, the following set inclusion holds
\[ \{z \in Q : |\log w(z) - (\log w)_Q | > \la \} \subset \{z \in Q : |\log w(z) - \log (w_Q) | > \la - C(w) \}.
\]
Applying \eqref{jn}, we obtain 
\[ A(\{z \in Q : |\log w(z) - (\log w)_Q | > \la \} ) \leq 2 e^2 \left[w \right]_{B_{2}}^2 e^{-\la}A(Q),
\]
for all $\la > C(w)$, hence proving $\eqref{expdec}$. Now under the additional assumption that $w$ is of bounded hyperbolic oscillation we will prove that $\log w \in BMO (\D)$. Let $D$ be a disc of radius $r>0$ centered at a point $z \in \D$. If $r \leq (1-|z|)/2$, we use the fact that $w$ has bounded hyperbolic oscillation, to find a constant $C(w)>0$, independent of $D$ such that $|\log w - \log w(z)| \leq C(w)$ on $D$. Since 
\begin{equation*}
   |\log w(z) - (\log w)_D | \leq  \frac{1}{A(D)} \int_D |\log w - \log w (z)| dA \leq C(w), 
\end{equation*}
we deduce 
\begin{equation*}
    \frac{1}{A(D)} \int_D |\log w - (\log w)_D | dA \leq 2C(w). 
\end{equation*}
\noindent 
If $r > (1-|z|)/ 2$, consider a Carleson square $Q$ with $D \cap \D \subset Q$ and $A(Q) < 4 A (D)$. Then 
\begin{equation*}
     \frac{1}{A(D)} \int_{D \cap \D}  |\log w - (\log w)_Q| dA \leq \frac{4}{A(Q)} \int_{Q}  |\log w - (\log w)_Q| dA , 
\end{equation*}
which, by \eqref{expdec}, is uniformly bounded. Hence $\log w \in BMO (\D)$.
\end{proof}
%
%----- END PROOF-------
%
\noindent Recall that a locally integrable function $f$ defined on $\R^d$ belongs to $BMO (\R^d)$, if 
\begin{equation*}
    \|f\|_{BMO(\R^d)} = \sup \frac{1}{m_d (R)} \int_{R} |f(x) - f_{R}| dm_d (x) < \infty , 
\end{equation*}
where the supremum is taken over all cubes $R \subset \R^d$ and $m_d$ is Lebesgue measure on $\R^d$. A crucial observation is that if $f \in BMO (\D)$, then the extension $f^*$ defined by 
\begin{equation}\label{BMOext}
f^*(z)=\begin{cases} 
f(z) & ,z \in \D \\ 	
f(1/\overline{z}) & ,|z|>1,
\end{cases}
\end{equation}
belongs to $BMO({\R}^2)$. Given $f\in BMO({\R}^d)$, denote by $\varepsilon (f)$ the infimum of $\varepsilon >0$, for which there exists a constant $\lambda (\varepsilon) >0$, such that 
\begin{equation*}
   m_d \left( \left\{ x \in Q : |f(x) - f_Q| > \lambda \right\} \right) \leq e^{ - \lambda / \varepsilon } m_d (Q), 
\end{equation*}
for any $\lambda > \lambda (\varepsilon)$ and any cube $Q \subset {\R}^d$. 
Garnett and Jones proved in \cite{GaJo} that there exists a constant $C>0$, only depending on the dimension such that for any $f \in BMO (\R^d)$, one has
\begin{equation}\label{GJ}
    C^{-1} \varepsilon (f) \leq \inf \{\|f-h  \|_{BMO (\R^d)} : h \in L^{\infty} (\R^d)  \} \leq  C \varepsilon (f) . 
\end{equation} 
This result will be used in the proof of part (ii) of Theorem \ref{T1}. \\ \\
\noindent In contrast to the theory of Muckenhoupt weights, where $w\in A_{2}$ implies that $\log w \in BMO(\partial \D)$, one cannot expect the same to hold for general $B_{2}$-weights, since they can posses very wild behavior inside $\D$. However, under some rigid assumptions one can in fact retrieve  similar results. In fact, our next result provides a relationship between Bekoll\'e-Bonami weights which are exponentials of harmonic functions and Bloch functions. Our next Proposition is reminiscent of the deep interplay between $A_{2}$-weights on $\partial \D$ and $BMO(\partial \D)$ as previously mentioned. Meanwhile, part (ii) gives a new characterization of $\B_{0}$, in a similar spirit to the characterization of vanishing mean oscillation by Sarason in \cite{Sar}.
%
%------BEGIN THM-------
%
\begin{prop}\label{HBekBlo} Let $f:\D \rightarrow \R$ be analytic. 
\begin{itemize}
\item[(i)] Then, $f\in \B$  if and only if $e^{\delta \Re(f)}$ is a $B_{2}$-weight, for some $\delta >0$.
\item[(ii)] Then,  $f\in \B_{0}$ if and only if
\begin{equation}\label{V2} \lim_{\delta \rightarrow 0+} \sup_{A(Q) < \delta} \left(\frac{1}{A(Q)}\int_{Q}e^{\Re(f)}dA \right) \left( \frac{1}{A(Q)} \int_{Q} e^{-\Re(f)}dA \right) = 1.
\end{equation} 
\end{itemize}
\end{prop}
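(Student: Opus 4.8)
The plan is to establish each equivalence by reducing it to the auxiliary results already in hand, the genuinely new work being the reverse implication of (i).

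For the forward direction of (i), suppose $f\in\B$. Since $f$ is holomorphic one has $\|f\|_{\B}=\|f\|_{\text{HLip}}$, and the real part satisfies $|\Re f(z)-\Re f(\zeta)|\le|f(z)-f(\zeta)|\le\|f\|_{\B}\,\beta(z,\zeta)$, so $v:=\Re f$ lies in $\text{HLip}(\D)$ with $\|v\|_{\text{HLip}}\le\|f\|_{\B}$. Assuming $f$ non-constant, I would pick any $\delta\in(0,2/\|f\|_{\B})$, so that $\|\tfrac{\delta}{2}v\|_{\text{HLip}}<1$; Lemma~\ref{LipB2} applied to $\tfrac{\delta}{2}v$ then gives $e^{\delta\Re f}=e^{2(\delta v/2)}\in B_2$, the constant case being trivial.

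For the reverse direction of (i), assume $w:=e^{\delta\Re f}\in B_2$ for some $\delta>0$ and aim to bound $(1-|z|^2)|f'(z)|$ uniformly. The key is that the scale-invariant estimate \eqref{expdec} controls the mean oscillation of $\log w=\delta\Re f$ on every Carleson square at once: integrating \eqref{expdec} in $\lambda$ produces a constant $C(w)$ with $\frac{1}{A(Q)}\int_Q|\log w-(\log w)_Q|\,dA\le C(w)$ for all $Q$, hence $\frac{1}{A(Q)}\int_Q|\Re f-(\Re f)_Q|\,dA\le C(w)/\delta$. Fixing $z$ near $\partial\D$, I would set $D=D\big(z,(1-|z|)/2\big)$ and choose a Carleson square $Q_z\supset D$ with $A(Q_z)\le C\,A(D)$. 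Since $\Re f$ is harmonic and $|\nabla\Re f|=|f'|$, the interior gradient estimate for harmonic functions, together with $A(Q_z)\le C\,A(D)$ and the mean-oscillation bound above, gives
\[ (1-|z|^2)|f'(z)|=(1-|z|^2)|\nabla\Re f(z)|\le \frac{C(1-|z|^2)}{1-|z|}\cdot\frac{1}{A(D)}\int_D|\Re f-(\Re f)_{Q_z}|\,dA\le \frac{C''(w)}{\delta},\]
which is bounded uniformly for $|z|$ close to $1$; on compact subsets of $\D$ the left side is bounded by continuity, so $f\in\B$.

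For (ii), the forward direction follows by combining two known facts: if $f\in\B_0$ then $\Re f\in VMO(\D)$ by the Coifman--Rochberg--Weiss correspondence \cite{CoRoWe} recalled in the introduction, and Lemma~\ref{BB-BMO}(ii) applied to $\Re f$ is exactly \eqref{V2}. For the reverse direction, assume \eqref{V2} and write $h=\Re f-(\Re f)_Q$, which has mean zero on $Q$; the product in \eqref{V2} equals $P_Q N_Q$ with $P_Q=\frac{1}{A(Q)}\int_Q e^{h}\,dA$ and $N_Q=\frac{1}{A(Q)}\int_Q e^{-h}\,dA$, each at least $1$ by Jensen, so $P_QN_Q\le1+\epsilon$ forces $P_Q,N_Q\le1+\epsilon$. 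Using $\cosh h\ge1+h^2/2$ I obtain
\[ \frac{1}{A(Q)}\int_Q|\Re f-(\Re f)_Q|^2\,dA\le P_Q+N_Q-2\le 2\epsilon\]
whenever $A(Q)<\delta(\epsilon)$, and Cauchy--Schwarz turns this into the $L^1$ mean-oscillation bound $\frac{1}{A(Q)}\int_Q|\Re f-(\Re f)_Q|\,dA\le\sqrt{2\epsilon}$. Feeding this into the gradient argument of (i) with squares $Q_z$ of area $<\delta(\epsilon)$ yields $(1-|z|^2)|f'(z)|\le C\sqrt{2\epsilon}$ for $|z|$ close to $1$; letting $\epsilon\to0$ gives $f\in\B_0$.

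The main obstacle is the reverse implication of (i): passing from the integral $B_2$ condition to a pointwise Bloch bound. The decisive feature is the scale invariance of \eqref{expdec}, with no dependence on $[w]_{B_2}$ in the exponent, which makes the mean oscillation of $\log w$ uniformly bounded on all Carleson squares simultaneously; marrying this with the harmonicity of $\Re f$ through the interior gradient estimate is what produces the uniform derivative bound. Part (ii) is then a localization of the same mechanism, its only extra ingredient being the elementary inequality $\cosh h\ge1+h^2/2$, which converts the near-equality in \eqref{V2} into decay of the local $L^2$-oscillation of $\Re f$.
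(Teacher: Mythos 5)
Your proposal is correct, and for the substantive ``if'' directions it takes a genuinely different route from the paper. The paper's engine is Lemma \ref{Cauchyosc}: via Cauchy's integral formula and the isometry of harmonic conjugation on $H^{2}$, it bounds $(1-|z|^{2})^{2}|f'(z)|^{2}$ by the $L^{2}$ mean oscillation of $u=\Re(f)$ on a Carleson square; for (i) this is fed with the exponential-integrability bound $\sup_{Q}\frac{1}{A(Q)}\int_{Q}e^{|u-u_{Q}|}dA\leq C(u)$, obtained directly from Jensen's inequality and the $B_{2}$ condition together with $t^{2}\leq 2e^{t}$, and for (ii) with the Sarason-type Lemma \ref{localSB2}. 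You instead extract a uniform $L^{1}$ mean-oscillation bound for $\log w$ by integrating the distribution estimate \eqref{expdec} of Lemma \ref{BB-BMO}(iii) in $\lambda$, and convert it into a pointwise bound with the classical interior gradient estimate $|\nabla u(z)|\leq \frac{C}{r}\,\frac{1}{A(D(z,r))}\int_{D(z,r)}|u-c|\,dA$ combined with $|f'|=|\nabla \Re (f)|$; in (ii) you inline a mean-zero version of Lemma \ref{localSB2} (the same elementary inequality $2+t^{2}\leq e^{t}+e^{-t}$, applied to $h=\Re(f)-(\Re f)_{Q}$, which avoids the renormalization $\int w\,d\mu=1$ and the set $S_{\e}$) and then localize the same gradient argument, using that $A(Q_{z})\lesssim(1-|z|)^{2}$ is eventually smaller than $\delta(\epsilon)$. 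Both routes are sound: yours is arguably more elementary, bypassing the conjugate-function step in Lemma \ref{Cauchyosc} and needing only $L^{1}$ oscillation, at the price of importing the John--Nirenberg-type estimate \eqref{expdec}, whereas the paper gets its oscillation bound from Jensen alone and reuses Lemma \ref{Cauchyosc} verbatim in (i) and (ii). Your forward direction of (i) also differs mildly, invoking Lemma \ref{LipB2} through $\|\Re (f)\|_{\text{HLip}}\leq\|f\|_{\B}$ rather than the paper's appeal to Lemma \ref{BB-BMO}(i) via $\|\Re( f)\|_{BMO(\D)}\leq C\|f\|_{\B}$; this is equally valid.
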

\noindent Notice that analyticity seems to compensate for the a priori assumption of the weights being of bounded hyperbolic oscillation. In order to prove Proposition \ref{HBekBlo}, we will need to establish two lemmas.
%
%------BEGIN LEMMA-----
%
\begin{lemma}\label{Cauchyosc}Let $f:\D\rightarrow \C$ be an analytic function  and $u=\Re(f)$. Then for any $3/4 <|z|< 1$, there exists a Carleson square $Q_{z}\subset \D$ with $A(Q_{z}) \leq 4 (1-|z|)^2$, such that
\begin{equation*} (1-|z|^{2})^{2}|f'(z)|^{2} \leq \frac{C}{A(Q_{z})} \int_{Q_{z}} |u-u_{Q_{z}}|^{2}dA,
\end{equation*}
for some universal constant $C>0$.
\end{lemma}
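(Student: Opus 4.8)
The plan is to reconstruct the complex derivative $f'(z)$ entirely from the real part $u=\Re(f)$ and then bound it by the mean square oscillation of $u$ over a nearby Carleson square. Fix $z$ with $3/4<|z|<1$ and put $r=(1-|z|)/2$, so that the Euclidean disc $D=\{\zeta:|\zeta-z|<r\}$ is contained in $\D$. Expanding $f(\zeta)=\sum_{k\ge 0}a_k(\zeta-z)^k$ on $D$, where $a_1=f'(z)$, a direct computation in polar coordinates centered at $z$ shows that the kernel $\overline{(\zeta-z)}$ is orthogonal over $D$ to every monomial $(\zeta-z)^k$ with $k\neq 1$ and to every anti-holomorphic monomial $\overline{(\zeta-z)}^k$, $k\ge 0$. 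In particular $\int_D \overline{f(\zeta)}\,\overline{(\zeta-z)}\,dA(\zeta)=0$, so writing $u=(f+\overline f)/2$ gives the reproducing identity
\[
 f'(z)=\frac{C_0}{r^4}\int_{D}u(\zeta)\,\overline{(\zeta-z)}\,dA(\zeta)
\]
for an absolute constant $C_0$. This is the heart of the argument: it expresses $f'(z)$ through $u$ alone, discarding the harmonic conjugate $\Im(f)$, which the oscillation of $u$ does not control.

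Since $\int_D \overline{(\zeta-z)}\,dA=0$, I may replace $u$ by $u-c$ inside the integral for any constant $c$; I take $c=u_{Q_z}$, the mean of $u$ over a Carleson square $Q_z$ to be chosen in the last step. Applying the Cauchy--Schwarz inequality and evaluating $\int_D|\zeta-z|^2\,dA$, which is a fixed multiple of $r^4$, then yields
\[
 |f'(z)|^2\le \frac{C_1}{r^4}\int_{D}|u-u_{Q_z}|^2\,dA
\]
with $C_1$ absolute.

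To convert this into the stated Carleson-square estimate, I choose $Q_z=Q(I_z)$ to be the Carleson square over the arc $I_z$ centered at $z/|z|$ with $m(I_z)$ comparable to $1-|z|$. The hypothesis $3/4<|z|<1$ forces $r=(1-|z|)/2<1/8$ and keeps $D$ away from $\partial\D$, so that $D\subset Q_z$ and $Q_z$ may be taken with $A(Q_z)\le 4(1-|z|)^2$. Because $D\subset Q_z$ and $u_{Q_z}$ is a constant, $\int_D|u-u_{Q_z}|^2\,dA\le \int_{Q_z}|u-u_{Q_z}|^2\,dA$. Finally $(1-|z|^2)^2/r^4\le C_2/(1-|z|)^2\le C_3/A(Q_z)$, and multiplying the previous display by $(1-|z|^2)^2$ gives the claim with a universal constant $C$.

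I expect the orthogonality computation underlying the reproducing identity to be the only subtle point; everything after it (subtracting the mean, the Cauchy--Schwarz step, and the elementary geometry relating $r$, $1-|z|$ and $A(Q_z)$) is routine. One should verify carefully that the restriction $|z|>3/4$ is precisely what guarantees the containment $D\subset Q_z$ together with the size bound $A(Q_z)\le 4(1-|z|)^2$, rather than merely a comparability $A(Q_z)\asymp(1-|z|)^2$.
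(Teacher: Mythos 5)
Your proof is correct, but it takes a genuinely different route from the paper's. The paper recovers $f'(z)$ from Cauchy's integral formula on circles $|\zeta-z|=r$: it subtracts a free constant $\alpha$, applies Cauchy--Schwarz on each circle, invokes the $L^2$-boundedness of the conjugation operator on $H^2$ to replace $|f-\alpha|$ by $|u-\Re(\alpha)|$ up to an absolute constant, and then integrates over $r\in[(1-|z|)/2,\,1-|z|]$ to turn the circle averages into an area integral over an annulus, finally enclosing it in a Carleson square with $\alpha=f_{Q_z}$. You instead work with area integrals from the start: testing against the Bergman-type kernel $\overline{\zeta-z}$ on the disc $D$ of radius $(1-|z|)/2$, the polar-coordinate orthogonality relations $\int_D \overline{(\zeta-z)}^{\,k+1}\,dA=0$, $k\ge 0$, annihilate the anti-holomorphic part $\overline{f}$ exactly, giving the reproducing identity $f'(z)=\frac{C_0}{r^4}\int_D u\,\overline{(\zeta-z)}\,dA$, with the freedom to subtract any constant from $u$ because $\int_D\overline{(\zeta-z)}\,dA=0$. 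This replaces the paper's appeal to M. Riesz's conjugate-function theorem by an elementary two-line computation and dispenses with the radial-integration step, so your argument is more self-contained; the paper's version is marginally more flexible in that the constant $\alpha$ is already free at the circle level. The geometric endgame is the same in both proofs: enclose the Euclidean disc in a Carleson square $Q_z$ with $A(Q_z)\le 4(1-|z|)^2$ (your disc has half the radius of the one used in the paper, which only makes the containment easier), enlarge the domain of integration since $u_{Q_z}$ is constant, and use $(1-|z|^2)^2/r^4\le C/A(Q_z)$. One shared, harmless caveat: if $u\notin L^2(Q_z)$ the right-hand side is infinite and the inequality is vacuous, so taking $c=u_{Q_z}$ as your subtracted constant is legitimate exactly as the paper's choice $\alpha=f_{Q_z}$ is.
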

\begin{proof} Notice that by Cauchy's integral formula, we can write
\[ f'(z) = \frac{1}{2\pi } \int_{0}^{2\pi} \frac{f(z+ re^{it}) - \alpha}{re^{it}} dt ,  \qquad z\in \D , 
\]
for $0 < r < 1-|z|$ and all $\alpha \in \C$. Applying Cauchy-Schwartz inequality, we get
\[ |f'(z)|^{2} \leq \frac{1}{2\pi r^2} \int_{0}^{2\pi} |f(z+re^{it})-\alpha|^{2} dt. 
\]
Since the conjugate operator is an isometry on the Hardy space $H^{2}$, we can find an absolute constant $C>0$, such that
\[
|f'(z)|^{2}\leq  \frac{C}{r^2} \int_{0}^{2\pi} |u(z+re^{it})-\Re(\alpha)|^{2} dt.
\]
Now integrating this inequality with respect to $r \in [(1-|z|)/2 ,  1-|z|]$, we get
\[
(1-|z|^{2})^{2}|f'(z)|^{2} \leq \frac{C}{(1-|z|)^{2}} \int_{ \{ (1-|z|)/2 \leq |z-\z| < 1-|z| \}} |u(\z)-\Re(\alpha)|^{2}dA(\z).
\]
Since $1-|z|< 1/4$, we can take $Q_z$ to be a Carleson square containing the disc $\{\z : |z-\z| < 1-|z| \}$ with $A(Q_{z}) \leq 4(1-|z|)^2$. Moreover, choosing $\alpha = f_{Q_{z}}$, we obtain
\[ (1-|z|^{2})^{2}|f'(z)|^{2} \leq \frac{4 C}{A(Q_{z})}\int_{Q_{z}} |u(\z)-u_{Q_{z}} |^{2} dA(\z). 
\]
\end{proof}
%
%------END LEMMA-------
%
\noindent Our next lemma is a refined version of an abstract measure theoretic lemma, attributed to D. Sarason (See \cite{Sar}, Lemma 3).
%
%-------BEGIN LEMMA--------
%
\begin{lemma} \label{localSB2} Let $(\Omega,\mu)$ be a probability space and let $w$ be a positive integrable function on $\Omega$ such that $w^{-1} $ is also  integrable. Assume 
\begin{equation}\label{small char}  \left( \int_{\Omega} w d\mu \right) \left( \int_{\Omega} w^{-1} d\mu \right) =1 + \e,
\end{equation}
for some $0<\e<1$
%$0<\e < (\sqrt{5}-1)/2 \approx 0,618$. 
Then 
\begin{equation}\label{Small MO}  \int_{\Omega} | \log w - \int_{\Omega} \log w \,d\mu \,  |^{2} \, d\mu \leq 4 \, \e.
\end{equation}
\end{lemma}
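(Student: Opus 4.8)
The plan is to exploit the scale invariance of both quantities in the statement, reduce to a normalized situation, and then close the estimate with Jensen's inequality together with an elementary pointwise bound on $\cosh$.

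First I would observe that the hypothesis and the conclusion are both invariant under the rescaling $w \mapsto cw$ for a constant $c>0$: the product in \eqref{small char} is unchanged because $\int_{\Omega} cw\,d\mu \cdot \int_{\Omega}(cw)^{-1}d\mu = \int_{\Omega}w\,d\mu \cdot \int_{\Omega}w^{-1}d\mu$, and the integrand in \eqref{Small MO} is unchanged because $\log(cw) - \int_{\Omega}\log(cw)\,d\mu = \log w - \int_{\Omega}\log w\,d\mu$. (Here $\log w$ is integrable since $|\log w| \le w + w^{-1}$ and both $w, w^{-1} \in L^{1}(\mu)$.) I may therefore normalize so that $\int_{\Omega}\log w\,d\mu = 0$, reducing the claim to showing $\int_{\Omega}(\log w)^{2}\,d\mu \le 4\e$.

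Writing $g = \log w$, set $a = \int_{\Omega}e^{g}\,d\mu$ and $b = \int_{\Omega}e^{-g}\,d\mu$, so that $ab = 1+\e$ by hypothesis. By Jensen's inequality applied to the convex function $e^{x}$ and the normalization $\int_{\Omega}g\,d\mu = 0$, we get $a \ge e^{\int_{\Omega}g\,d\mu} = 1$ and likewise $b \ge 1$. The crucial consequence is that these two lower bounds, together with $ab = 1+\e$, trap each factor from above: $a = (1+\e)/b \le 1+\e$ and symmetrically $b \le 1+\e$, so that $1 \le a,b \le 1+\e$. This is exactly what converts the smallness of the product $ab - 1 = \e$ into control of the sum $a+b-2$.

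To finish I would use the elementary inequality $\cosh x \ge 1 + \fr{x^{2}}{2}$, valid for all real $x$ (immediate from the nonnegative Taylor series of $\cosh$), which gives the pointwise bound $e^{g} + e^{-g} \ge 2 + g^{2}$. This already guarantees $g \in L^{2}(\mu)$, since $g^{2}$ is dominated by the integrable function $e^{g}+e^{-g}$. Integrating against $\mu$ then yields $\int_{\Omega}g^{2}\,d\mu \le a + b - 2 \le 2\e \le 4\e$, which is \eqref{Small MO}. There is no real obstacle in this argument; the only points demanding attention are the verification of scale invariance that legitimizes the normalization, and the observation that Jensen pins both $a$ and $b$ into $[1,1+\e]$, which is the mechanism turning the near-equality in \eqref{small char} into the quadratic smallness in \eqref{Small MO}. (Note the argument in fact delivers the slightly sharper constant $2\e$.)
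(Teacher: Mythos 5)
Your proof is correct, but it takes a genuinely different route from the paper's. The paper normalizes the arithmetic mean, $\int_{\Omega} w\, d\mu = 1$, then splits $\Omega$ into the set $S_{\e} = \{ \tfrac{1}{1+\e} \leq w \leq 1+\e\}$ and its complement, applies the pointwise bound $\log^{2} w \leq w + w^{-1} - 2$ only off $S_{\e}$, and must finish with the variance identity $\int |\log w - \int \log w\, d\mu|^{2} d\mu = \int \log^{2} w\, d\mu - (\int \log w \,d\mu)^{2}$ because its normalization does not make $\int \log w \,d\mu$ vanish. You instead normalize the geometric mean, $\int_{\Omega} \log w \, d\mu = 0$ (legitimate, as you verify, since $|\log w| \leq w + w^{-1} \in L^{1}(\mu)$), which makes the left side of \eqref{Small MO} literally $\int g^{2} d\mu$ with $g = \log w$, and then Jensen's inequality pins $a = \int e^{g} d\mu$ and $b = \int e^{-g} d\mu$ into $[1, 1+\e]$, so the single global application of $e^{t} + e^{-t} \geq 2 + t^{2}$ yields $\int g^{2} d\mu \leq a + b - 2 \leq 2\e$ with no set-splitting at all. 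Both arguments hinge on the same elementary $\cosh$ inequality, but your choice of normalization replaces the paper's two fiddlier steps (the estimate over $\Omega \setminus S_{\e}$ and the final variance identity) with one clean use of Jensen, and it buys a sharper constant: you state $2\e$, and in fact your own bounds give $\e$, since $a + (1+\e)/a$ is maximized at the endpoints of $[1, 1+\e]$, where $a + b = 2 + \e$.
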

%
%---------END LEMMA & BEGIN PROOF OF LEMMA---------
%
\begin{proof}
By means of multiplying $w$ with a positive scalar, thus not affecting the quantity in \eqref{small char}, we may assume that $\int_{\Omega} w d\mu=1$. Consider the set
\[ S_{\e} = \left\{\omega \in \Omega: \fr{1}{1+\e} \leq w(\omega) \leq 1+\e \right\},
\]
and notice that for $\omega \in S_{\e}$, we have that $|\log w(\omega) | \leq \log(1+\e) \leq \e$. Moreover, from the elementary inequality $2+t^{2} \leq e^{t}+e^{-t}$, $t \in \mathbb{R}$, we get that for every positive function $w$, the estimate $ \log^{2} w \leq w + w^{-1}-2$ holds. With these observations at hand, we can write 
\begin{equation}\label{MOest1}  \int_{\Omega}  \log^{2} w  d\mu \leq \e^{2} + \int_{\Omega \setminus S_{\e}}\left( w + w^{-1} \right) d\mu - 2\mu(\Omega \setminus S_{\e}).
\end{equation}
According to \eqref{small char} and the assumption $\int_{\Omega} w d\mu=1$, we have
\begin{equation*}\label{Cest} \int_{\Omega \setminus S_{\e}}\left( w + w^{-1} \right) d\mu = 2+\e - \int_{S_{\e}}( w+w^{-1}) d\mu \leq 2+ \e - \fr{2}{1+\e} \mu(S_{\e}).
\end{equation*}
Hence going back to \eqref{MOest1}, we obtain 
\begin{equation*}\label{MOest2}  \int_{\Omega}  \log^{2} w  d\mu \leq \e^2 + \e +2(1-\fr{1}{1+\e}) \mu(S_{\e}) \leq 4 \e.
\end{equation*}
It now immediately follows that
\[ \int_{\Omega} | \log w - \int_{\Omega} \log w d\mu |^{2} d\mu = \int_{\Omega} \log^{2} w d\mu - \left( \int_{\Omega} \log w d\mu \right)^{2} \leq 4\e.
\]
%Assuming that $1<\eta <2$, we proceed by estimating $\mu(S_{\eta})$ from below. Using the fact that $w+w^{-1} \geq \eta + 1/\eta$ on $\Omega \setminus S_{\eta}$, and the trivial inequality $w+w^{-1}\geq 2$ elsewhere, we get 
%\[ 2+ \e = \int_{S_{\eta}}(w+w^{-1}) d\mu + \int_{\Omega \setminus S_{\eta}}(w+w^{-1}) d\mu \geq 2 + \fr{(\eta-1)^{2}}{\eta} \mu(\Omega \setminus S_{\eta}).
%\]
%This shows that $\mu(S_{\eta}) \geq 1- \fr{\e \eta}{(\eta-1)^{2}}$, under the additional assumption that $1<\eta < 2$ can be chosen such that $0 < \fr{ \e \eta}{(\eta-1)^{2}}<1$. Taking this for granted for a moment and going back to \eqref{MOest2}, we arrive at
%\[ \int | \log w | d\mu \leq \eta + \e - \left(\fr{2}{\eta}-1\right) \left(1- \fr{ \e \eta}{(\eta-1)^{2}}\right) = (\eta-1) (1 +\fr{2}{\eta})+\e \left( 1 +\fr{2-\eta}{(\eta-1)^{2}}\right).
%\]
%Now choosing $\eta = 1 + \e^{1/3}\in (1,2)$, which is compatible with the previous assertion $0< \fr{ \e \eta}{(\eta-1)^{2}}= \e^{1/3}(1+\e^{1/3}) <1$, as long as $0<\e < (\sqrt{5}-1)/2$, we get that 
%\[ \int |\log w | d\mu \leq 3\e^{1/3} + \e^{1/3}(\e^{2/3}+ 1-\e^{1/3}) \leq 4\e^{1/3}.
%\]
%Thus, we ultimately conclude
%\[ \int | \log w - \int \log w \,d\mu \,  | \, d\mu  \leq 2 \int | \log w | d\mu \leq 8 \e^{1/3}.
%\]
\end{proof}
%
%-------END PROOF OF LEMMA--------
%
%--------BEGIN PROOF OF THM--------
%
\begin{proof}[Proof of Proposition \ref{HBekBlo}]
Note that there exists a constant $C>0$ such that $\|f\|_{BMO(\D)} \leq C \|f\|_{\B} $. Similarly $f \in VMO (\D)$ if $f \in {\B}_0$. Hence the left to right implications of (i) and (ii) are immediate consequences of Lemma \ref{BB-BMO} and the intrinsic content of Proposition \ref{HBekBlo} is the "if" part of the statements. 
\noindent
(i) In order to prove the converse implication, we assume $e^{u}$ is a $B_{2}$-weight with $u:\D \rightarrow \R$ harmonic. Notice that for any Carleson square $Q \subset \D$, we have
\[ \frac{1}{A(Q)} \int_{Q} e^{|u-u_{Q}|}dA \leq \frac{1}{A(Q)} \int_{Q} e^{u-u_{Q}}dA + \frac{1}{A(Q)} \int_{Q} e^{u_{Q}-u}dA.
\] 
From Jensen's inequality and the $B_{2}$-condition on $e^{u}$, it follows that 
\[ e^{u_{Q}} \leq \frac{1}{A(Q)} \int_{Q}e^{u}dA \leq \left[e^{u}\right]_{B_{2}} e^{u_{Q}}.
\]
We deduce that there exists $C(u)>0$, such that 
\[ \sup_{Q}\frac{1}{A(Q)} \int_{Q} e^{|u-u_{Q}|}dA \leq C(u).
\]
Let $\Re(f)=u$ and observe that by Lemma \ref{Cauchyosc} and the simple inequality $t^{2} \leq 2e^{t}$, for $t>0$, there exists a constant $C_1 (u)>0$ such that 
\[ \sup_{3/4 < |z| < 1} (1-|z|^{2})^{2} |f'(z)|^{2} \leq C_1 (u).
\]
This is enough to conclude that $f\in \B$.
\\
(ii) Again, let $u=\Re(f)$ and notice that by Lemma \ref{Cauchyosc}, it follows that for sufficiently small $\delta>0$, we have
\[ \sup_{1-|z| \leq \delta} (1-|z|^{2})^{2}|f'(z)|^{2} \leq \sup_{A(Q)< 4 \delta^2}\frac{C}{A(Q)}\int_{Q} |u(\z)-u_{Q} |^{2} dA(\z),
\]
Now assuming that condition \eqref{V2} holds, we have according to Lemma \ref{localSB2}, that
\begin{equation*}\label{Voscu} \lim_{\delta \rightarrow 0+} \sup_{A(Q)< \delta} \frac{1}{A(Q)} \int_{Q} |u(z)-u_{Q}|^{2}dA(z) = 0.
\end{equation*} 
This proves that $f\in \B_{0}$.
\end{proof}
%It is worth to remark that analyticity is redundant in the proofs of the left to right implications in (i) and (ii). Indeed, one can substitute the condition by $f\in \B$ with $f\in BMO( \D)$ and $f\in \B_{0}$ with $f\in VMO(\D)$, respectively. (Dont need anymore??)
%
%-----END PROOF------
%

%
%---------------PROOFS OF MAIN RESULTS-----------------
%

\section{Proofs of Main Results.}
\noindent We are now ready to prove Theorem \ref{T1}. 

\begin{proof}[Proof of Theorem \ref{T1}] 
(i) We first show that 
\begin{equation}\label{4}
    \inf \{\|f-h\|_{\text {HLip}} : h \in L^\infty (\D) \} \leq 4 \gamma (f)  . 
\end{equation}
Given $\z \in \D$, let $D_{\z}$ be the disc centered at $\z$ of radius $(1-|\z|^2) / 4$. Since the weight $w = e^f$ is of bounded hyperbolic oscillation, so is $(w\circ \phi_{z})^{\la}$, for all $z\in \D$, $\la>0$, by conformal invariance of \eqref{WCHD}. Consequently, there exits a constant $C_1 (\lambda) >0$, such that 
\begin{align*}
    \left(w\circ \phi_{z}\right)^{\la}(\z) &  \leq C_1 (\lambda) \fr{1}{A(D_{\z})}\int_{D_{\z}}  \left(w\circ \phi_{z}\right)^{\la} dA  \\  & \leq 16 C_1 (\lambda) \left(\fr{1+|\z|^{2}}{1-|\z|^{2}}\right)^{2} \int_{\D}  \left(w\circ \phi_{z}\right)^{\la} dA, 
\end{align*}
for all $z,\z \in \D$. Now assuming that $w^{\la}$ is a $B_{2}$-weight, we may apply Lemma \ref{B1*} to $w^{\la}$. This provides a constant $C(\la)>0$, such that 
\[ \left(w\circ \phi_{z}\right)^{\la}(\z) \leq C(\la) \left(\fr{1+|\z|^{2}}{1-|\z|^{2}} \right)^{2} w(z)^{\la}, \qquad z,\z \in \D.
\]
Now performing the change of variable $\z \mapsto \phi_{z}(\z)$ and taking logarithms, we conclude that
\begin{equation}\label{petit2}
    |\log w (\z) - \log w (z)| \leq \frac{\log C(\lambda)}{\lambda} +  \frac{4 \beta(\z , z)}{\lambda}, \quad  \z , z \in \D . 
\end{equation}
Set $\e = 1/ \lambda$ and $C_{\e} = \log C(\lambda) / \lambda$. Fix $\eta >0$ and let  $\Lambda=\left\{z_{j}\right\}^{\infty}_{j= 1}$ be a sequence of points in $\D$, with the properties that $\beta(z_{k},z_{j}) \geq C_{\e}/\eta$, for all $k \neq j$, and such that $\inf_{\la \in \Lambda} \beta(z, \la) \leq 10 C_{\e}/\eta$, for any $z\in \D$. See \cite{DurSch} for details on such constructions. By \eqref{petit2}, we have that \[ \abs{ \log w(z_{k}) -\log w (z_{j}) } \leq C_{\e} + 4 \e \beta(z_{k},z_{j}) \leq (4 \e + \eta) \beta(z_{k},z_{j}),\quad   j\neq k.
\] 
This means that the map $\log w : \Lambda \rightarrow \mathbb{R}$ defined by $z_{j}\mapsto \log w (z_{j})$ is Lipschitz continuous with respect to the hyperbolic metric $\beta$ on $\Lambda \subset \D$ and the euclidean metric in $\mathbb{R}$, with norm less than $(4 \e + \eta)$. We now use the McShane-Valentine extension Theorem (see \cite{Mc}) to find a function $g= g_{\e, \eta}:\D \rightarrow \R$, with $\|g\|_{\text {HLip}} \leq 4 \e + \eta$, such that $g (z_j ) = \log w (z_j)$ for $j=1,2,\ldots$. We claim that  $h:= \log w - g$ belongs to $L^{\infty}(\D)$. To this end, fix an arbitrary $z\in \D$ and observe that by construction of $\Lambda$, we can always pick $z_{j}\in \Lambda$, with $\beta(z, z_{j}) \leq 10C_{\e} /\eta$. Then, using \eqref{petit2}, we obtain $|h(z)| = |h(z) - h(z_j)| \leq C_\e + (8 \e + \eta)  \beta (z, z_j) \leq C_\e + 10 (8 \e + \eta)  C_{\e} / \eta $ and we deduce $h \in L^{\infty} (\D)$. Since  $\|\log w - h \|_{\text {HLip}} \leq 4 \e + \eta$ and $\eta >0$ can be taken arbitrary small, this proves \eqref{4}.

% We now use the proof of the McShane-Valentine Theorem. Consider the function $g_{\e}:\D \rightarrow \R$, defined by
% \[ g_{\e}(z) := \inf_{\z \in \Lambda} \left( \log w(\z) + 2\e \beta(z,\zeta) \right), \qquad \, z \in \D.
% \]
% It is straightforward to check that $g_{\e}$ is a hyperbolic Lipschitz continuous extension of $\log w$ to the whole of $\D$, with norm less than $2\e$. More precisely, it satisfies $g_{\e}(z_{j})= \log w(z_{j})$, for all $j\geq1$ and 
% \[ \abs{ g_{\e}(z) -g_{\e} (\z) } \leq 2\e \beta(z,\z) \qquad, \, \forall z,\z \in \D.
% \]
% It now suffices to verify that $f_{\e}:= \log w - g_{\e}$ belongs to $L^{\infty}(\D)$. To this end, fix an arbitrary $z\in \D$ and observe that by construction of $\Lambda$, we can always pick $z_{j}\in \Lambda$, with $\beta(z, z_{j}) \leq 3C_{\e} /\e$. Since $g_{\e}$ is an extension of $\log w$ on $\Lambda$, $f_{\e}$ vanishes on $\Lambda$.
% Now using (\ref{AMcond1}) and the fact that $g_{\e}$ is hyperbolic Lipschitz continuous, we ultimately obtain 
% \[ \abs{f_{\e}(z)}= \abs{ f_{\e}(z) -f_{\e} (z_{j}) } \leq C_{\e} + 3\e \beta(z,z_{j})\leq 10 C_{\e}.
% \]

% Let $h \in L^\infty (\D)$. By Lemma \ref{LipB2}, $e^{2(f-h)/\|f-h\|_{\text{Lip}_{\beta}}} \in B_2$. Hence $\beta (f) \leq \|f-h\|_{\text{Lip}_{\beta}} / 2 $ which shows that $\beta (f) \leq \inf \{ \|f-h\|_{\text{Lip}_{\beta}} \} / 2 $. 
\noindent Conversely, we now prove the first estimate in part (i) of Theorem \ref{T1}. By Lemma \ref{LipB2}, for any $0< t < 1 $ and any $h \in L^\infty (\D)$ such that $f-h \in  \textup{HLip}(\D)$, we have
\begin{equation*}
    e^{2 t (f-h) / \|f - h \|_{\text {HLip}}} \in B_2 . 
\end{equation*}
Hence $\gamma (f) \leq \|f - h\|_{\text {HLip }} / 2t$. We deduce that 
\begin{equation*}
    \gamma (f) \leq \frac{1}{2} \inf \{\|f-h\|_{\text {HLip}} : h \in L^\infty (\D) \}. 
\end{equation*}
\noindent 
We now turn our attention to proving (ii). Note that $\gamma (f) = \gamma (f-h)$ for any $h \in L^\infty (\D)$. Part (i) of Lemma \ref{BB-BMO} gives that $\gamma (f) = \gamma (f-h) \leq C_2^{-1} \|f-h \|_{BMO (\D)}
$, for any $h \in L^\infty (\D)$. Hence 
\begin{equation*}
 \gamma (f) \leq C_2^{-1} \inf \{\|f-h \|_{BMO(\D)} : h \in  L^\infty (\D) \}.    
\end{equation*}
Note that in this step, we did not need the assumption that $e^{f}$ has bounded hyperbolic oscillation, thus this part of the proof holds for any $f\in BMO(\D)$.
We now establish the converse estimate. Fix  $\alpha >0$ and assume $w_\alpha = e^{f/ \alpha} \in B_2$. Part (iii) of Lemma \ref{BB-BMO} gives that
\begin{equation*}
    A \left( \{z \in Q : \left|\frac{f(z)}{\alpha} - (\log w_\alpha)_Q \right| > t \}\right) \leq 2 e^2 \left[w_{\alpha} \right]_{B_{2}}^2 e^{-t}A(Q), 
\end{equation*}
for any $t >  2 + \log \left[ w_{\alpha} \right]_{B_{2}}$ and any Carleson square $Q \subset \D$. Writing $t^* = \alpha t$, we deduce 
\begin{equation}\label{ED2}
    A \left( \{z \in Q : |f(z) - f_Q| > t^* \} \right) \leq 2 e^2 \left[w_{\alpha} \right]_{B_{2}}^2 e^{-t^*/ \alpha} A(Q), 
\end{equation}
for any $t^* >  \alpha (2 +  \log \left[ w_{\alpha} \right]_{B_{2}})=:t_{0}^*$ and any Carleson square $Q \subset \D$. %Observe that \eqref{ED2} gives that there exists a constant $C_1= C_1(\alpha, w)>0$ such that
%\begin{equation}\label{estrella}
   % |f_Q - \alpha (\log w_\alpha)_Q| \leq C_1.
%\end{equation}
%Combining \eqref{ED2} and \eqref{estrella}, we find a constant $C_2 = C_2(\alpha , w)>0$ such that 
%\begin{equation}\label{123}
 %  A \left( \{z \in Q : |f(z) -  f_Q| > t^* \} \right) \leq C_2  e^{-t^*/ \alpha} A(Q), 
%\end{equation}
%for any $t^* > \max \{ \alpha ( 2 + \log \left[ w_{\alpha} \right]_{B_{2}} ), C_1 \} =: t_0^* $ and any Carleson square $Q \subset \D$.  
\noindent 
Since $w=e^f$ has bounded hyperbolic oscillation, there exists a constant $C(w)>0$, such that for any disc $D$ of center $z \in \D$ and radius smaller than $(1-|z|)/2$, we have $|f-f(z)|\leq C(w)$ on $D$. This observation and the estimate in \eqref{ED2} imply the existence of a constant $C_1 = C_1 (\alpha, w)>0$, such that for any disc $D$ centered at a point in $\D$, we have
\begin{equation}\label{ED3}
    A \left( \{z \in D \cap \D : |f(z) - f_{ D \cap \D} |  > 2 t^* \} \right) \leq C_1 e^{-t^* / \alpha } A(D), 
\end{equation}
for any $t^* > \max \{t_0^* , 2C(w)\} =: t_1^*$. Consider the extension $f^*$ of $f$ given by \eqref{BMOext}. The estimate \eqref{ED3} gives that there exists a constant $C_2 >0$, such that for any square $R \subset \R^2$, we have
\begin{equation*}
    A \left( \{z \in R : |f^* (z) - f^*_R| >  2 t^* \} \right) \leq C_2 e^{- t^* / 2\alpha} A(R), 
\end{equation*}
for any $t^* > 2 t_1^* $. Now the result of Garnett and Jones \eqref{GJ} implies the existence of an absolute constant $C>0$, such that  
\begin{equation*}
    \inf \{ \|f^* - h \|_{BMO (\R^2)} : h \in  L^\infty (\R^2) \} \leq  C \alpha. 
\end{equation*}
We deduce 
\begin{equation*}
    \inf \{ \|f - h \|_{BMO (\D)} : h \in  L^\infty (\D) \} \leq  C \alpha,  
\end{equation*} 
which finishes the proof. 
\end{proof}
%
%-------END PROOF OF MTHM1------------
\noindent Regarding the proof of \thmref{T1}, a valuable remark is in order. First, notice that as a byproduct of the proof of part (i), an application of the McShane-Valentine theorem \cite{Mc} gives the following description: \\ A weight $w$ has bounded hyperbolic oscillation if and only if there exists $u\in L^{\infty}(\D)$ and $v \in \textup{HLip}(\D)$, such that 
\begin{equation}\label{B+HLip} \log w = u + v.
\end{equation}
Now consider the linear space $\mathcal{S}= \{f: e^{f} \, \text{has bounded hyperbolic oscillation} \}$ equipped with the norm
\[ \|f\|_{\mathcal{S}} = \inf \{\|u\|_{L^{\infty}(\D)} + \|v\|_{\text{HLip}} : f=u+v \}.
\]
It is straightforward to check that $\mathcal{S}$ is a conformally invariant Banach space, contained in $BMO(\D)$. 
%With this terminology at hand, the proof of \thmref{T1} (i) actually shows that for any weight $e^{f}$ of bounded hyperbolic oscillation, we have
%\[ 2 \gamma(f) \leq \inf\{ \|f-h\|_{\mathcal{S}} : h\in L^{\infty}(\D)\} \leq 4 \gamma(f).
%\]
%Secondly, in the proof of part (ii), we realize that $B_{2}$-weights with bounded hyperbolic oscillation are restrictions to $\D$ of Muckenhoupt $A_{2}$-weights on $\R^{2}$. Thus, imposing the condition of bounded hyperbolic oscillation on Bekoll\'e-Bonami weights, we inherit the properties of Muckenhoupt weights. \\ \\% In fact, from the proof of \thmref{T1} (ii) continue to hold for weights of the form $w=e^{f}$ with $f\in BMO(\D)$. \\ \\%satisfying 
%\[ \sup \left(\frac{1}{A(D\cap \D)}\int_{D\cap \D}w dA \right) \left( \frac{1}{A(D\cap \D)} \int_{D\cap \D}w^{-1}dA \right)< \infty,
%\]
%where the supremum is taken over all discs $D$ centered at points in $\D$.
We now turn to the proof of Corollary \ref{closure}. 
%
%------BEGIN PROOF OF MTHM2----------------
%
\begin{proof}[Proof of Corollary \ref{closure}] Notice that the equivalence of (i), (ii) and (iii) is an immediate consequence of the distance formulas from Theorem \ref{T1}. Now suppose (i) holds. Then a verbatim implementation of the proof of part (i) of \thmref{T1} yields \eqref{petit2}, i.e, for any $\la >0$, there exists a constant $C(\la)>0$, such that
\[ \abs{ \log w(z) -\log w (\z) } \leq C(\la) + \frac{4}{\la} \beta(z,\z),  \qquad  z,\z \in \D.  
\]
This precisely condition \eqref{Maincond} in (iv). Conversely, assume condition  \eqref{Maincond} in (iv) holds.  Fix an arbitrary arc $I\subseteq \partial \D$ with center $\xi_{I}$ and set $z_{I} = (1-m(I))\z_{I}$. Now for any subarc $J\subset I$, a straightforward calculation shows that there exists an absolute constant $M>0$ such that  
\[ \left| \beta(z_{J},z_{I}) - \fr{1}{2}\log \left( \fr{1-|z_{I}|}{1-|z_{J}|}\right) \right| \leq M. 
\]
With this at hand, we may rephrase condition (\ref{Maincond}) as follows: for any $\e>0$, there exists $c_{\e}>0$, such that for any arc $J\subset I$, we have
\begin{equation}\label{thm1est} e^{-c_{\e}} \left( \fr{m(I)}{m(J)} \right)^{-\e} \leq \fr{w(z_{J})}{w(z_{I})} \leq e^{c_{\e}} \left( \fr{m(I)}{m(J)} \right)^{\e}.
\end{equation}
%
%--------BEGIN FIGURE-------
%
\begin{figure}\label{fig1}
\centering
\begin{tikzpicture}
\draw (0,0) rectangle (4,4) node [above] at (2, 4) {$Q_{I}$};
%Horizontals
\draw (0,2) -- (4,2) node [below] at (2,3) {$T_{I}$};
\draw (0,1) -- (4,1); %node [below] at (1,2) {$T_{I_{l}}$};
\draw (0, 1/2) -- (4, 1/2); %node [below] at (3, 2) {$T_{I_{r}}$};
\draw (0, 1/4) -- (4, 1/4);
%verticals
%1st
\draw (2,2) -- (2,0);
%2nd
\draw (1,1) -- (1,0);
\draw (3,1) -- (3,0);
%3rd
\draw (1/2, 1/2) -- ( 1/2, 0);
\draw (3/2, 1/2) -- (3/2,0);
\draw (5/2, 1/2) -- (5/2,0);
\draw (7/2, 1/2) -- (7/2, 0);
%4th
\draw (1/4, 1/4) -- ( 1/4, 0);
\draw (3/4, 1/4) -- (3/4,0);
\draw (5/4, 1/4) -- (5/4,0);
\draw (7/4, 1/4) -- (7/4, 0);
\draw (9/4, 1/4) -- (9/4, 0);
\draw (11/4, 1/4) -- (11/4,0);
\draw (13/4, 1/4) -- (13/4,0);
\draw (15/4, 1/4) -- (15/4, 0);
\end{tikzpicture}
\caption{Tiling the Carleson box $Q_{I}$ into top-halves $\left\{T_{J}\right\}_{J\in \Dy(I)}$.}
\label{fig2}
\end{figure}
%
%-------END FIGURE------
%
We will show that $w^\lambda \in B_2$ for any $\lambda >0$. To this end, fix an arbitrary $\la>0$ and let $\Dy(I)$ denote the dyadic decomposition of $I$. Since $w$ has bounded hyperbolic oscillation, it follows that all values of $w$ on top-halves $T_{J} = \{z \in Q_{J} : 1-|z| \geq m(J)/ 2 \}$ of a Carleson square  $Q_{J}$ are comparable to $w(z_{J})$. Observing that the collection $\left\{T_{J}\right\}_{J\in \Dy(I)}$ forms a tiling of $Q_{I}$ (See Figure 1), we can find a constant $C(\la) >0$, such that 
\[ \fr{1}{A(Q_{I})}\int_{Q_{I}} w^{\la} dA = \fr{1}{A(Q_{I})}\sum_{J\in \Dy(I)} \int_{T_{J}} w^{\la} dA \leq \frac{C(\la)}{A(Q_{I})}\sum_{J\in \Dy(I)} A(T_{J})w^{\la}(z_{J}).
\]
Now applying (\ref{thm1est}) to this, we actually get 
\begin{equation*} \fr{1}{A(Q_{I})}\int_{Q_{I}} w^{\la} dA  \leq w^{\la}(z_{I}) \frac{C_1 (\la)}{A(Q_{I})} \sum_{J\in \Dy(I)} A(T_{J}) \left(\fr{m(I)}{m(J)}\right)^{\la \e}. %\lesssim w^{\la}(z_{I}) \sum_{n=0}^{\infty} (2^{\la \e -1})^{n}
\end{equation*}
For $0<\e < 1/ \la$, a straightforward computation shows that
\[  \fr{1}{A(Q_{I})}\sum_{J\in \Dy(I)} A(T_{J}) \left(\fr{m(I)}{m(J)}\right)^{\la \e}  = 
% \sum_{n=0}^{\infty} \sum_{j=1}^{2^{n}}\fr{2^{-2n}}{2}(2^{n})^{\la \e}= 
\fr{1}{2}\sum_{n=0}^{\infty} (2^{\la \e -1})^{n} < \infty.
\] 
It now follows that there exists $C_{\la}>0$, such that 
\begin{equation}\label{avest} \fr{1}{A(Q_{I})}\int_{Q_{I}} w^{\la} dA \leq C_{\la} w^{\la}(z_{I}).
\end{equation}
In a similar way, using the leftmost inequality of (\ref{thm1est}), we can also prove an estimate for the dual-weight $w^{-\la}$, identical to that of (\ref{avest}). This is enough to conclude that $w^{\la}$ is a $B_{2}$-weight, for any $\la >0$. 
%\[ \fr{1}{A(Q_{I})}\int_{Q_{I}} w^{-\la} dA \leq C_{\e} w^{-\la}(z_{I}).
%\]
\end{proof}
%
%-----------PROOF OF THEOREM 2.3
%
\noindent
Given $\xi \in \partial \mathbb{D}$, let $\Gamma (\xi):= \{z\in \D: |z-\xi|<\sigma(1-|z|) \}$ be the Stolz angle of vertex $\xi$ and fixed aperture $\sigma>1$. Given a function $g \in \B$ and $\varepsilon >0$, we consider the set  
\begin{equation*}
    K(\varepsilon, g) = \{z \in \mathbb{D} : (1-|z|^{2}) |g'(z)| > \varepsilon \}. 
\end{equation*}
In the proof of Theorem \ref{MThm3}, we will use the following description of the closure of $H^p \cap \B$ in $\mathcal{B}$. Fix  $0<p<\infty$ and $g \in \mathcal{B}$. Then $g$ belongs to the closure of $H^p \cap \B$ in $\mathcal{B}$ if and only if, for any $\varepsilon >0$, the area-function 
\begin{equation}\label{A}
    A_{\varepsilon} (g) (\xi) = \left ( \int_{\Gamma(\xi) \cap K(\varepsilon, g)} \frac{dA(z)}{(1-|z|^2)^2} \right)^{1/2} , \quad \xi \in \partial \D,
\end{equation}
belongs to $L^p (\partial \D, dm)$. See \cite{GaMoPa} and \cite{NicGal}.
% Let $BMOA$ be the space of analytic functions in the Hardy space $H^2$ such that their boundary values have real and imaginary part in $BMO (\partial \D)$. It is well known that $BMOA \subset \B$. In order to prove Theorem \ref{MThm3}, we will use the following description of the closure of $BMOA$ in $\mathcal{B}$. Let $g \in \mathcal{B}$, then $g$ belongs to the closure of $BMOA$ in $\mathcal{B}$ if and only if, for any $\varepsilon >0$ the set $K(\varepsilon) = \{z \in \mathbb{D} : (1-|z|^{2}) |g'(z)| > \varepsilon \}$ satisfies
% \begin{equation}\label{AH}
%     \int_{K(\varepsilon)} % \frac{dA(z)}{1-|z|^{2}} < \infty . 
% \end{equation}
 % See \cite{GhaZh}. 
 We will also use the following description of lacunary series in the Bloch space. A lacunary series is a power series 
 \begin{equation}\label{lacun}
     g(z) = \sum_{k=1}^\infty a_k z^{n_k}, \quad z \in \mathbb{D},
 \end{equation}
with the property that there exists a constant $\rho >1$, such that the positive integers $\{n_k \}$ satisfy  $n_{k+1} \geq \rho \, n_k$, $k=1,2, \ldots$. It was proved in \cite{AnClPo} that a lacunary series of the form \eqref{lacun} belongs to $\mathcal{B}$ if and only if $\sup_{k\geq 1} |a_k| < \infty$. 
\begin{proof}[Proof of Theorem \ref{MThm3}]
Fix a sequence $\{a_k \}_{k=1}^{\infty}$ of complex numbers which does not tend to $0$ and such that $\sup_{k\geq 1} |a_k| \leq 1$. Let $\{n_k \}_{k=1}^{\infty}$ be an increasing sequence of positive integers such that
\begin{equation}\label{sla}
    \lim_{k \to \infty } \frac{n_{k+1}}{n_k} = \infty . \end{equation}
Consider the lacunary series $g$ defined in \eqref{lacun}. For $M>1$ and $k=1,2,\ldots$, consider the annulus
\begin{equation*}
    A_k (M) :=\{z \in \mathbb{D} : \frac{1}{M n_k} \leq 1-|z| \leq \frac{M}{ n_k}  \}. 
\end{equation*}
Fix a point $z \in A_j (2)$ and notice that
\begin{equation*}
    |z||g'(z)| \geq n_j |a_j| |z|^{n_j} - \sum_{k=1}^{j-1} n_k - \sum_{k=j+1}^{\infty} n_k (1- \frac{1}{2n_j})^{n_k}. 
\end{equation*}
Since $1/2 n_j \leq 1-|z| \leq 2/ n_j$, we deduce
\begin{equation}\label{Aj2}
    (1-|z|^{2}) |z||g'(z)| \geq \frac{|a_j|}{2} (1- \frac{2}{n_j})^{n_j} - \frac{4}{n_j} \sum_{k=1}^{j-1} n_k - \frac{4}{n_j} \sum_{k=j+1}^\infty n_k (1- \frac{1}{2n_j})^{n_k}. 
\end{equation}
We claim that the last two terms on the righthand side of \eqref{Aj2} tend to zero, as $j\rightarrow \infty$. Indeed, since $\{n_j\}$ is lacunar, there exists a constant $\rho >1$, such that for all $j=1,2,\ldots$, we have $n_{j+1} \leq \rho (n_{j+1}-n_{j})$. The assumption \eqref{sla} shows that the first sum tends to zero, via
\begin{equation}\label{asymp1} \fr{1}{n_{j}} \sum_{k=1}^{j-1} n_{k} \leq \fr{n_{1}}{n_{j}} + \rho  \fr{(n_{j-1}-n_{1})}{n_{j}} \longrightarrow 0,  \qquad \, j\rightarrow \infty.
\end{equation}
A similar argument also gives
\begin{equation*}\fr{1}{n_{j}} \sum_{k= j+1}^{\infty}n_{k} (1- \frac{1}{2n_j})^{n_k} \leq \fr{n_{j+1}}{n_{j}}(1- \frac{1}{2n_j})^{n_{j+1}}+
  \fr{\rho}{n_{j}} \sum_{k= j+2}^{\infty}(n_{k}-n_{k-1} )(1- \frac{1}{2n_j})^{n_k}
\end{equation*}
\begin{equation}\label{asymp2} \leq \fr{n_{j+1}}{n_{j}}\exp(-n_{j+1}/2n_{j} )+
 \frac{\rho}{n_{j}}   \sum_{k= n_{j+1}}^{\infty}  (1- \frac{1}{2n_j})^{k} \longrightarrow 0,  \qquad  \, j\rightarrow \infty.
 \end{equation}
We have thus proven that the last two terms in (\ref{Aj2}) tend to $0$ as $j \to \infty$. Consequently, given $\e >0$, there exists $j_0 >0$ such that for any $j>j_0$ with $|a_j| > \varepsilon$, we have 
\begin{equation}\label{anell}
    (1-|z|^{2})|z||g'(z)| > C \varepsilon , \quad z \in A_j (2). 
\end{equation}
Here $C>0$ is an absolute constant. Since $\{a_j\}_{j= 1}^{\infty}$ does not tend to zero, there exists $\e_{0}>0$, such that the set $K(\e_{0},g) = \{z \in \mathbb{D} : (1-|z|^{2}) |g'(z)| > \e_{0} \}$ contains infinitely many annulus $A_j (2)$ .%, say $\left\{A_{j_{k}}(2)\right\}_{k=1}^{\infty}$. 
We deduce that the function $A_{\varepsilon} (g)$ defined in \eqref{A} satisfies $A_{\varepsilon} (g) (\xi) = \infty$, for every $\xi \in  \partial \mathbb{D}$. 
% \begin{equation}\label{AH1}
%     \int_{K(\e_{0})} \frac{dA(z)}{1-|z|^{2}} \geq \sum_{k=1}^{\infty}\int_{A_{j_{k}}(2)}\fr{dA(z)}{1-|z|^{2}}  = \infty ,
% \end{equation}
Hence the function $g$ does not belong to the closure of $H^p \cap \B$ in $\mathcal{B}$, for any $0<p\leq \infty$.  
\noindent 
We now prove that $g$ satisfies condition (\ref{AMaincond}). Let $M>1$ be a constant to be fixed later and consider
\begin{equation*}
    A(M) = \bigcup_{k=1}^{\infty} A_k (M).
\end{equation*}
Fix $z \in \mathbb{D} \setminus A(M)$ and let $j=j(z)$ be the unique positive integer, satisfying 
\begin{equation*}
    \frac{M}{n_{j+1}} < 1-|z| < \frac{1}{M n_j} .  
\end{equation*}
Write
\begin{equation*}
(1-|z|)|z||g'(z)| \leq \frac{1}{Mn_{j}}\sum_{k=1}^{j} n_k + (1-|z|)\sum_{k = j+1}^{\infty} n_k |z|^{n_k}. 
\end{equation*}
According to (\ref{asymp1}), there exists a constant $C_{1}>1$ independent of $j$, such that 
\[ \frac{1}{n_j} \sum_{k=1}^{j} n_k  \leq C_{1}.
\]
An argument similar to (\ref{asymp2}) shows that there exists a constant $C_{2}>0$, such that
\begin{equation*}
(1-|z|)\sum_{k=j+1 }^{\infty} n_k |z|^{n_k} \leq C_{2}(1-|z|) n_{j + 1} |z|^{n_{j + 1}}. %\leq C_{2}M(1-\fr{M}{n_{j+1}} )^{n_{j+1}}   
\end{equation*}
Now it is straightforward to check that 
\[ \sup_{\frac{M}{n_{j+1}} < 1-|z| < \frac{1}{M n_j} } (1-|z|) n_{j + 1} |z|^{n_{j + 1}} \leq M \left(1-\fr{M}{n_{j+1}} \right)^{n_{j+1}} \leq M e^{-M} .
\]
Combining, we conclude that there exists an absolute constant $C>1$, such that 
%\begin{equation*}
 %   (1-|z|^{2})|z||g'(z)| \leq \frac{2}{M n_j} \sum_{k=1}^{j} n_k + C(1-|z|^{2}) n_{j + 1} |z|^{n_{j+ 1}}. 
%\end{equation*}
%Now, \eqref{sla} and the estimate $(1-|z| ) n_{j + 1} >M$ gives that there exists a constant $C >1$ such that 
\begin{equation*}
    (1-|z|^{2})|z||g'(z)| \leq C \left( \frac{1}{M} + M e^{-M}\right), \qquad  z \in  \D \setminus A(M).
\end{equation*}
Let $0 <\varepsilon < \|g\|_{\B}$ be arbitrary and pick $M=M(\varepsilon) >1$, such that
\begin{equation}\label{petit}
    (1-|z|^{2})|g'(z)| \leq \varepsilon , \quad z \in \mathbb{D} \setminus A(M(\varepsilon)). 
\end{equation}
Denote by $l (\gamma)$ the hyperbolic length of the arc $\gamma \subset \D$, given by 
\begin{equation*}
    l (\gamma) = \int_{\gamma} \frac{|dz|}{1-|z|^{2}}.
\end{equation*}
Note that for any hyperbolic geodesic $\Gamma$ and any $k \geq 1$ we have $l (\Gamma \cap A_k (M(\varepsilon))) \leq 2 \log M(\varepsilon)$. Indeed, the shortest hyperbolic segment joining two concentric circles in $\D$ centered at the origin, is a segment contained in a radius. Thus
\[ l (\Gamma \cap A_k (M(\varepsilon))) \leq \int_{1-M(\e)/n_{k}}^{1-1/M(\e)n_{k}} \fr{dt}{1-t} = 2\log M(\e).
\]
Hence for any hyperbolic segment $\gamma$ of hyperbolic length larger than $K(\varepsilon) := 2 \log M(\varepsilon) / \varepsilon$, we have
\begin{equation}\label{length}
   \frac{ l (\gamma \cap A (M(\varepsilon) )}{l (\gamma )} \leq \varepsilon .
\end{equation}
Let $A_{\e}=A(M(\varepsilon))$ be an abbreviation of that set.
Now let $z, w \in \mathbb{D}$ with $\beta (z,w) > K(\varepsilon)$ and let $\gamma$ be the hyperbolic segment joining $z$ and $w$. Then 
\begin{equation*}
    |g(z) - g(w)| \leq \int_{\gamma} |g' (\xi)| |d \xi|.
\end{equation*}
Using \eqref{length}, we have
\begin{equation}\label{u}
    \int_{\gamma \cap A_{\e}} |g' (\xi)| |d \xi| \leq \|g \|_{\mathcal{B}} l ( \gamma \cap A_{\e}) \leq \varepsilon \|g \|_{\mathcal{B}}  \beta (z,w).
\end{equation}
Applying \eqref{petit}, we also have
\begin{equation}\label{dos}
    \int_{\gamma \cap (\mathbb{D} \setminus A_{\e})} |g' (\xi)| |d \xi| \leq  \varepsilon l ( \gamma) = \varepsilon \beta (z,w).
\end{equation}
Now combining \eqref{u} and \eqref{dos}, it follows that
\begin{equation*}
    |g(z) - g(w)| \leq \varepsilon (\|g\|_{\mathcal{B}} + 1) \beta (z,w), \quad \text{ if } \quad \beta(z,w) \geq K(\varepsilon). 
\end{equation*}
Since $g \in \mathcal{B}$, we automatically have $|g(z) - g(w)| \leq K(\varepsilon) \|g \|_{\mathcal{B}}$, for $\beta (z,w) \leq K(\varepsilon)$. This finishes the proof. 
\end{proof}
%
%--------SECTION APPLICATIONS-------
%
\section{Applications to spectra of Cesar\'o operators.}
In this final section, we shall briefly mention an application which initially sparked the interest in characterizing weights with the property that every power of the weight is in the class $B_{2}$. The background concerns the spectrum of generalized Cesar\'o operators 
\begin{equation}\label{Tgop} T_{g}f (z) = \int_{0}^{z}f(\z)g'(\z) d\z ,  \qquad z\in \D.
\end{equation}
on the Bergman spaces $L^{p}_{a}(\D, dA)$ of analytic functions $f$ in $\D$, which belong to $L^p (\D, dA)$, $p>0$. It is well known that the linear operator $T_{g}$ is bounded on $L^{p}_{a}(\D ,dA)$ if and only if the symbol $g$ belongs to $\B$. For further details on Cesar\'o operators, we refer the reader to \cite{AleCon} and references therein. Now a complete characterization of the spectrum of $T_{g}$ on weighted Bergman spaces has been  given in terms of a $B_{\infty}$-type condition (see \cite{AleCon}, and \cite{AlePottReg} for alternative reformulations). For $L^{p}_{a}(\D, dA)$ with $p>0$, it goes as follows: A complex number $\la \neq 0$ does not belong to the spectrum of $T_{g}$ on $L^{p}_{a}(\D,dA)$ if and only if, there exists a constant $C_{g/\la}>0$, such that the weight $e^{p\Re(g/\la)}$ satisfies
\begin{equation*}\label{Binfcond} \fr{1}{A(Q)}\int_{Q} e^{p\Re(g/\la)} dA \leq C_{g/\la} \exp \left( \fr{1}{A(Q)}\int_{Q} p \Re(g/ \la)dA \right),
\end{equation*} 
for every Carleson square $Q\subset \D$. From this, it follows that $e^{p\Re(g/\la)}$ is a $B_{2}$-weight, if and only if $ \la$ and $- \la$ do not belong to the spectrum of $T_{g}$ on $L^{p}_{a}(\D,dA)$. Actually, if $\| \sigma_{p} (g) \|$ denotes the spectral radius of $T_{g}$ on $L^{p}_{a}(\D, dA)$, then we have  
\begin{equation*}\label{SRB2} \|\sigma_{p}(g) \| =  \inf \{ \la >0: \exp(p\Re(g/\la \xi)) \in B_{2}\, ,\, \forall \xi \in \partial \D \}.
\end{equation*}
From this observation and Lemma \ref{BWconinv}, it follows that the spectral radius is conformally invariant. More precisely, if $g_{z} = (g\circ\phi_{z}) - g(z)$ denotes the hyperbolic translate of $g$, then for any $z\in \D$, we have
\[ \| \sigma_{p}(g) \| = \| \sigma_{p}( g_{z} ) \|.
\] 
We naturally lend the notations of $L^{\infty}(\D)$ and $BMO(\D)$ to include complex-valued functions. As a consequence of \thmref{T1}, we can now successfully give a Bergman space analogue of Theorem 2.4 in \cite{AdBa}.
%
%-------- BEGIN COROLLARY-------
%
\begin{cor}\label{Ap1}There exists an absolute constant $C>0$, such that for any $p>0$ and any $g \in \B$, we have 
\[ \frac{1}{Cp} \, \| \sigma_{p}(g) \| \leq \inf_{h\in L^{\infty}(\D)} \|g-h\|_{BMO(\D)} \leq  \frac{C}{p} \, \| \sigma_{p}(g) \|.
\]
In particular, if the spectrum of $T_{g}$ on $L^{p}_{a}(\D,dA)$ does not contain any non-zero points of the real and imaginary axes, then $g$ belongs to the closure of $L^{\infty}(\D)$ in $BMO(\D)$, and thus the spectrum is $\{0\}$.
\end{cor}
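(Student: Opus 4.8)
The plan is to reduce the corollary to \thmref{T1}(ii) by identifying the quantity $\gamma(g)$ from \eqref{beta} with the spectral radius $\|\sigma_p(g)\|$, up to the explicit factor $p$. First I would record the characterization stated just before the corollary: the identity
\[
\|\sigma_p(g)\| = \inf\{\lambda > 0 : \exp(p\Re(g/\lambda\xi)) \in B_2 \text{ for all } \xi \in \partial\D\}.
\]
The immediate obstacle is that $\gamma$ in \thmref{T1} is defined for real-valued $f$ via the single condition $e^{f/t} \in B_2$, whereas here $g$ is complex-valued and the spectral radius involves the rotated real parts $\Re(g/\lambda\xi)$ for \emph{all} $\xi \in \partial\D$ simultaneously. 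So the first real step is to set up the correct dictionary between the complex-valued picture and the real-valued distance formula.

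To handle this, I would apply \thmref{T1}(ii) separately to the real-valued functions $f_\xi := p\Re(g/\xi)$ for each $\xi \in \partial\D$, noting that $\gamma(f_\xi) = \inf\{t>0 : \exp(p\Re(g/\xi)/t) \in B_2\}$, and that as $\xi$ ranges over $\partial\D$ the supremum $\sup_\xi \gamma(f_\xi)$ is precisely $p\,\|\sigma_p(g)\|$ after rescaling $t \mapsto p\lambda$. The key linear-algebra fact I would use is that the $BMO(\D)$ distance of the complex-valued $g$ to $L^\infty(\D)$ is comparable, up to absolute constants, to the supremum over $\xi$ of the $BMO(\D)$ distances of the real-valued $\Re(g/\xi)$ to $L^\infty(\D)$; indeed $\inf_h\|g-h\|_{BMO}$ controls and is controlled by $\sup_\xi \inf_h \|\Re(g/\xi) - h\|_{BMO}$, since a complex function and its real-part rotations differ only by bounded absolute constants in the comparison (one direction is trivial from $\Re(g/\xi) = \Re((g-h)/\xi)$, the other from recovering the two real coordinates of $g$ from two suitable choices of $\xi$). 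Combining this comparison with \thmref{T1}(ii) applied to each $f_\xi$ yields
\[
\frac{1}{Cp}\,\|\sigma_p(g)\| \leq \inf_{h \in L^\infty(\D)} \|g - h\|_{BMO(\D)} \leq \frac{C}{p}\,\|\sigma_p(g)\|,
\]
with an absolute $C$, since the constant in \thmref{T1}(ii) is universal and independent of the particular real-valued input.

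For the final assertion, I would argue as follows. If the spectrum of $T_g$ on $L^p_a(\D, dA)$ contains no non-zero point of the real or imaginary axis, then by the characterization recalled above, $e^{p\Re(g/\lambda)}$ is a $B_2$-weight for every real and purely imaginary $\lambda \neq 0$, hence $\|\sigma_p(g)\| = 0$. The left-hand inequality then forces $\inf_h \|g-h\|_{BMO(\D)} = 0$, i.e. $g$ lies in the closure of $L^\infty(\D)$ in $BMO(\D)$. Finally, since $g$ is then approximable by $L^\infty$ functions in $BMO$, the same equivalences give that $e^{p\Re(g/\lambda\xi)} \in B_2$ for \emph{all} $\lambda \neq 0$ and all $\xi$, which by the spectral characterization means no non-zero $\lambda$ lies in the spectrum; hence the spectrum reduces to $\{0\}$.

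The main obstacle I anticipate is the complex-to-real reduction in the middle paragraph: making precise that the scalar quantity $\|\sigma_p(g)\|$ (which already packages the supremum over all boundary rotations $\xi$) matches the $BMO$-distance of the single complex-valued $g$, rather than of one fixed real part. The resolution is that both sides are rotation-invariant in the right sense, and the two-real-parameter content of a complex function is captured by taking $\xi \in \{1, i\}$ up to absolute constants, so no information is lost and the universal constant of \thmref{T1}(ii) survives the passage to the complex-valued setting.
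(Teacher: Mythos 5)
Your main reduction is exactly the route the paper intends (the paper offers no separate proof: the corollary is presented as an immediate consequence of Theorem \ref{T1}(ii) together with the spectral radius formula $\|\sigma_{p}(g)\| = \inf\{\lambda>0 : \exp(p\Re(g/\lambda\xi))\in B_{2}\ \forall \xi\in\partial\D\}$), and the main inequality chain is essentially correct: for each $\xi\in\partial\D$ the weight $e^{p\Re(g/\xi)}$ has bounded hyperbolic oscillation because $|\Re(g/\xi)(z)-\Re(g/\xi)(\zeta)|\leq \|g\|_{\B}\,\beta(z,\zeta)$ (you should verify this hypothesis of Theorem \ref{T1} explicitly), the constant in Theorem \ref{T1}(ii) is universal, and the comparison $\inf_{h}\|g-h\|_{BMO(\D)} \asymp \sup_{\xi}\inf_{h}\|\Re(g/\xi)-h\|_{BMO(\D)}$ via $\xi\in\{1,-i\}$ works with absolute constants. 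Two bookkeeping points: with your normalization $f_{\xi}=p\Re(g/\xi)$ one gets $\sup_{\xi}\gamma(f_{\xi})=\|\sigma_{p}(g)\|$, not $p\,\|\sigma_{p}(g)\|$ (equivalently $\sup_{\xi}\gamma(\Re(g/\xi))=\|\sigma_{p}(g)\|/p$, which is what the displayed inequality actually needs); and identifying $\inf\{\lambda : \forall\xi\}$ with $\sup_{\xi}$ of the single-direction infima uses that $w\in B_{2}$ implies $w^{\theta}\in B_{2}$ for $0<\theta<1$ (Jensen), which should be recorded.

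The genuine gap is in your final paragraph. From the hypothesis that the spectrum avoids the nonzero real and imaginary axes, the characterization only yields that all powers of $e^{\Re g}$ and of $e^{\Im g}$ are $B_{2}$-weights, i.e. the $B_{2}$ condition for the four directions $\xi\in\{\pm 1,\pm i\}$; the assertion ``hence $\|\sigma_{p}(g)\|=0$'' does not follow at that point, since the spectral radius formula quantifies over \emph{all} $\xi\in\partial\D$. Moreover, your next sentence cites the wrong side of the inequality: the left-hand inequality converts distance zero into spectral radius zero, while deducing $\inf_{h}\|g-h\|_{BMO(\D)}=0$ from $\|\sigma_{p}(g)\|=0$ would require the right-hand one. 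The repaired order is: the axis conditions give $\gamma(\Re g)=\gamma(\Im g)=0$; Theorem \ref{T1}(ii) then gives $\inf_{h}\|\Re g-h\|_{BMO(\D)}=\inf_{h}\|\Im g-h\|_{BMO(\D)}=0$, hence $\inf_{h}\|g-h\|_{BMO(\D)}=0$; the left-hand inequality of the corollary now yields $\|\sigma_{p}(g)\|=0$, so the spectrum is $\{0\}$. (Alternatively one can pass from the four axis directions to all $\xi$ directly, since $\Re(g/\xi)$ is a linear combination of $\Re g$ and $\Im g$ and Cauchy--Schwarz gives $\bigl(\frac{1}{A(Q)}\int_{Q} e^{u+v}dA\bigr)\bigl(\frac{1}{A(Q)}\int_{Q} e^{-u-v}dA\bigr) \leq [e^{2u}]_{B_{2}}[e^{2v}]_{B_{2}}$ --- but some such argument must be supplied; as written the step is a non sequitur.)
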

%--------END COROLLARY---------
%\begin{proof}
%\end{proof}
\noindent
A worthy final remark is that from the proof of Corollary \ref{closure}, it follows that the spectral radius is comparable to the infimum of $\e>0$, for which there exists $C(\e)>0$, such that 
\[ |g(z)-g(\z)| \leq C(\e) + \e\beta(z,\z) \qquad, \, z,\z \in\D.
\]
In order to estimate the spectral radius, this serves as a more practical condition. For example, it is evident from the discussions surrounding Corollary \ref{clbl}, that functions which belong to the closure of $H^{\infty}$ in $\B$, induce Cesar\'o operators with zero spectral radius. Meanwhile, \thmref{MThm3} provides a non-trivial example of a Bloch function $g$, such that $T_g$ has zero spectrum, but such that $g$ does not belong to the closure of $H^{p}\cap \B$ in $\B$, for any $0<p \leq \infty$.\\ \\ 
%
%--------END FINAL SECTION----------
%
\noindent \textbf{Acknowledgement.} 
The authors would like to thank Alexandru Aleman and Sandra Pott for fruitful discussions and for their valuable inputs on the content of this manuscript.\\

\bibliographystyle{siam}
\bibliography{MathRef1}

\Addresses
\end{document}